
\documentclass[12pt]{article}%
\usepackage[intlimits]{amsmath}
\usepackage{amssymb}
\usepackage{latexsym}
\usepackage{tikz}
\usepackage{anysize}
\usepackage{subfigure}
\usepackage[T1]{fontenc}
\usepackage[sc]{mathpazo}
\usepackage{color}
\usepackage[colorlinks]{hyperref}
\usepackage{amsfonts}
\usepackage{microtype}
\usepackage{graphicx}%
\setcounter{MaxMatrixCols}{30}
\linespread{1.1}
\definecolor {refcol}{RGB}{40,0,255}
\hypersetup{colorlinks=true,allcolors=refcol}
\setlength{\textwidth}{7.0in} \setlength{\textheight}{9.0in}
\setlength{\topmargin}{-15pt} \setlength{\headsep}{0pt}
\setlength{\headheight}{0pt} \setlength{\oddsidemargin}{-16pt}
\setlength{\evensidemargin}{-16pt}
\makeatletter
\newfont{\footsc}{cmcsc10 at 8truept}
\newfont{\footbf}{cmbx10 at 8truept}
\newfont{\footrm}{cmr10 at 10truept}
\pagestyle{plain}
\newtheorem{theorem}{Theorem}

\newtheorem{conjecture}[theorem]{Conjecture}
\newtheorem{corollary}[theorem]{Corollary}

\newtheorem{lemma}[theorem]{Lemma}

\newtheorem{proposition}[theorem]{Proposition}
\newtheorem{question}[theorem]{Question}

\newenvironment{proof}[1][Proof]{\noindent{\textbf {#1}  }}  {\hfill$\Box$\bigskip}
\begin{document}

\title{\textbf{On the }$\alpha$\textbf{-index of graphs with pendent paths}}
\author{Vladimir Nikiforov\thanks{Department of Mathematical Sciences, University of
Memphis, Memphis TN 38152, USA.} \ and Oscar Rojo\thanks{Department of
Mathematics, Universidad Cat\'{o}lica del Norte, Antofagasta, Chile.}}
\date{}
\maketitle

\begin{abstract}
Let $G$ be a graph with adjacency matrix $A(G)$ and let $D(G)$ be the diagonal
matrix of the degrees of $G$. For every real $\alpha\in\left[  0,1\right]  $,
write $A_{\alpha}\left(  G\right)  $ for the matrix
\[
A_{\alpha}\left(  G\right)  =\alpha D\left(  G\right)  +(1-\alpha)A\left(
G\right)  .
\]
This paper presents some extremal results about the spectral radius
$\rho_{\alpha}\left(  G\right)  $ of $A_{\alpha}\left(  G\right)  $ that
generalize previous results about $\rho_{0}\left(  G\right)  $ and $\rho
_{1/2}\left(  G\right)  $.

In particular, write $B_{p,q,r}$ be the graph obtained from a complete graph
$K_{p}$ by deleting an edge and attaching paths $P_{q}$ and $P_{r}$ to its
ends. It is shown that if $\alpha\in\left[  0,1\right)  $ and $G$ is a graph
of order $n$ and diameter at least $k,$ then%
\[
\rho_{\alpha}(G)\leq\rho_{\alpha}(B_{n-k+2,\lfloor k/2\rfloor,\lceil
k/2\rceil}),
\]
with equality holding if and only if $G=B_{n-k+2,\lfloor k/2\rfloor,\lceil
k/2\rceil}$. This result generalizes results of Hansen and Stevanovi\'{c}
\cite{HaSt08}, and Liu and Lu \cite{LiLu14}.

\end{abstract}

T\textbf{AMS classification: }\textit{ 05C50, 15A48}

\textbf{Keywords: }\textit{convex combination of matrices; signless Laplacian;
adjacency matrix; graph diameter; spectral radius.}

\section{Introduction}

Let $G$ be a graph with adjacency matrix $A(G)$, and let $D\left(  G\right)  $
be the diagonal matrix of its vertex degrees. In \cite{Nik17} the matrix
$A_{\alpha}(G)$ has been defined for any real $\alpha\in\left[  0,1\right]  $
as
\[
A_{\alpha}(G)=\alpha D(G)+(1-\alpha)A(G).
\]
Write $Q\left(  G\right)  $ for the signless Laplacian $A(G)+D(G)$ of $G$ and
note that $A_{0}\left(  G\right)  =A\left(  G\right)  $ \ \ and
\ \ \ $2A_{1/2}\left(  G\right)  =Q\left(  G\right)  $; thus, the family
$A_{\alpha}(G)$ extends both $A\left(  G\right)  $ and $Q\left(  G\right)  $.

Write $\rho_{a}\left(  G\right)  $ for the spectral radius of $A_{\alpha}(G)$
and call $\rho_{\alpha}\left(  G\right)  $ the $\alpha$\emph{-index} of $G$.
In the spirit of the general problem of Brualdi and Solheid \cite{BrSo86}, one
can ask how large or how small can be the $\alpha$-index of graphs with some
specific properties. For example: how large $\rho_{\alpha}\left(  G\right)  $
can be if $G$ is graph of order $n$ and diameter at least $k$? In fact, for
$\alpha=0$ this question has been answered by Hansen and Stevanovi\'{c} in
\cite{HaSt08}.

Denote by $B_{p,q,r}$ the graph obtained from a complete graph $K_{p}$ by
deleting an edge and attaching paths $P_{q}$ and $P_{r}$ to its
ends\footnote{Hansen and Stevanovi\'{c} call these graphs \emph{bugs.
}Recently, using some advanced techniques, the spectrum of $B_{p,q,r}$ has
been calculated in \cite{Roj17}.} (see Fig. 1 for an example). With this
definition, Hansen and Stevanovi\'{c}'s result reads as:

\begin{theorem}
[\textbf{Hansen, Stevanovi\'{c} \cite{HaSt08}}]\label{ad}Let $G$ be a graph of
order $n$ with $\mathrm{diam}\left(  G\right)  \geq k$. If $k=1$, then
$\rho_{0}(G)=\rho_{0}(K_{n})$. If $k\geq2$, then
\[
\rho_{0}\left(  G\right)  \leq\rho_{0}(B_{n-k+2,\lfloor k/2\rfloor,\lceil
k/2\rceil}),
\]
with equality holding if and only if $G=B_{n-k+2,\lfloor k/2\rfloor,\lceil
k/2\rceil}$.
\end{theorem}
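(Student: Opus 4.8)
\emph{Reduction to a diameter-$k$ extremal graph.} The case $k=1$ carries no real constraint: for a connected graph the adjacency spectral radius strictly increases when any edge is added, so the unique maximiser of $\rho_0$ among $n$-vertex graphs is $K_n$, and $\mathrm{diam}(K_n)=1$. Assume $k\ge 2$ and let $G$ be a connected $n$-vertex graph with $\mathrm{diam}(G)\ge k$ and $\rho_0(G)$ as large as possible; being connected, adding any edge to $G$ strictly increases $\rho_0$. I first claim $\mathrm{diam}(G)=k$. Suppose $d:=\mathrm{diam}(G)>k$ and fix a geodesic $v_0v_1\cdots v_d$; since it is a geodesic, $v_{d-2}v_d\notin E(G)$. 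Adding the edge $e=v_{d-2}v_d$ keeps $G$ connected and leaves $d_{G+e}(v_0,v_{d-1})=d-1\ge k$, because the path $v_0\cdots v_{d-1}$ survives and every $v_0$–$v_{d-1}$ walk that uses $e$ has length at least $d$. Thus $\mathrm{diam}(G+e)\ge k$ while $\rho_0(G+e)>\rho_0(G)$, contradicting maximality. Hence $\mathrm{diam}(G)=k$, and in particular no edge can be added to $G$ without dropping the diameter below $k$.

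\emph{Tools.} Fix a geodesic $P:v_0,v_1,\dots,v_k$, so $d(v_0,v_k)=k$, and let $\mathbf f>0$ be the Perron eigenvector of $A(G)$. The basic move is the rotation: if $uw\in E(G)$, $vw\notin E(G)$, $v\neq w$ and $f_v\ge f_u$, then $G'=G-uw+vw$ satisfies $\mathbf f^{\mathrm T}A(G')\mathbf f=\mathbf f^{\mathrm T}A(G)\mathbf f+2f_w(f_v-f_u)\ge\mathbf f^{\mathrm T}A(G)\mathbf f$, whence $\rho_0(G')\ge\rho_0(G)$, strictly when $f_v>f_u$. Extremality of $G$ says that no rotation and no edge addition can simultaneously preserve $\mathrm{diam}\ge k$ and strictly increase $\rho_0$, and the equality cases of Perron–Frobenius then force rigidity. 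Partition $V(G)$ into the levels $L_i=\{w:d(v_0,w)=i\}$, $0\le i\le k$ (every distance is at most $k$), with $v_i\in L_i$.

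\emph{Extracting the bug and finishing.} Combining rotations with the impossibility of further edge additions, I would establish: (i) there is an index $j$, $1\le j\le k-1$, such that $L_j$ is the only non-singleton level, every other $L_i$ being $\{v_i\}$ — heavy off-path vertices get rotated into one common level, and the level partition governs $\mathbf f$; (ii) each $w\in L_j$ is adjacent to $v_{j-1}$ (its only neighbour in $L_{j-1}$) and to $v_{j+1}$, and $L_j\cup\{v_{j-1},v_{j+1}\}$ spans $K_{|L_j|+2}$ minus the single edge $v_{j-1}v_{j+1}$: that edge must be absent, for otherwise $d(v_0,v_k)<k$, whereas any other edge inside this set can be inserted without shortening $d(v_0,v_k)$ (a route through an edge of $L_j$ has length at least $k+1$), and once (i) holds no other pair sits at distance $k$; (iii) consequently $|L_j|=n-k$ and $G\cong B_{n-k+2,\,j,\,k-j}$. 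It remains to optimise over $j$: by the standard fact that relocating the last vertex of the longer of two pendant paths onto the shorter one strictly increases $\rho_0$ (equivalently, $\rho_0(B_{p,q,r})$ is strictly Schur-concave in $(q,r)$ for fixed $q+r$), $\rho_0(B_{n-k+2,j,k-j})$ is maximised precisely when $\{j,k-j\}=\{\lfloor k/2\rfloor,\lceil k/2\rceil\}$, giving the stated inequality and the uniqueness of the extremal graph.

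\emph{The main difficulty.} The substantive part is steps (i)–(ii): upgrading the purely local assertions ``no rotation helps'' and ``no edge can be added'' to the global conclusion that the extremal graph has exactly one fat level, placed internally, spanning a clique minus one edge. This hinges on a careful analysis of the equality cases — two off-path vertices with equal Perron entries, diametral pairs other than $(v_0,v_k)$ that could obstruct an edge insertion, and the coupling between the two pendant arms — all of which must be ruled out. I would also isolate the final balancing step as a separate lemma on pendant paths, since it needs a short comparison of the Perron entries at the two arm-ends; the diameter reduction of the first step and the remaining bookkeeping are routine.
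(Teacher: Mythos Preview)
Your outline follows the same skeleton as the paper's argument (the paper proves the general $\alpha\in[0,1)$ statement and recovers this theorem at $\alpha=0$): edge-maximality gives $\mathrm{diam}(G)=k$ and makes each $L_i\cup L_{i+1}$ a clique; then one shows the fat levels collapse to a single internal one, yielding $G\cong B_{n-k+2,j,k-j}$; finally one balances $j$. So the strategy is right, but two of the steps you defer are exactly where the content lies, and in both places your sketch misses the idea the paper actually uses.

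\textbf{Step (i).} ``Heavy off-path vertices get rotated into one common level'' is not how the paper proceeds, and it is not clear this heuristic can be made to work: a priori several levels can be fat with Perron entries arranged so that no single diameter-preserving rotation strictly helps, and the equality analysis you allude to becomes tangled. The paper's device is a \emph{degree} bound: from $\Delta(G)\ge\rho_0(G)\ge\rho_0(B_{n-k+2,\lfloor k/2\rfloor,\lceil k/2\rceil})>\rho_0(K_{n-k+2}-e)>n-k$ one gets a vertex $w\in L_i$ with $d(w)\ge n-k+1$, and since $d(w)=|L_{i-1}|+|L_i|+|L_{i+1}|-1$, this forces $|L_j|=1$ for every $j\notin\{i-1,i,i+1\}$ in one stroke. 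Only after this localisation are rotations used, and then just in two concrete configurations ($|L_{i-1}|\ge 2$ or $|L_{i+1}|\ge 2$), each dispatched by a short quadratic-form comparison. Without the degree bound your reduction to a single fat level is a genuine gap.

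\textbf{The balancing step.} The ``standard fact'' you invoke is not the Li--Feng lemma: in $B_{p,q,r}$ the two pendant paths are rooted at \emph{non-adjacent} vertices (the ends of the deleted edge), and for non-adjacent roots the monotonicity $\rho_0(G_{p,q}(u,v))<\rho_0(G_{p-1,q+1}(u,v))$ can fail in general graphs. The paper proves the needed inequality $\rho_\alpha(B_{k,p,q})<\rho_\alpha(B_{k,p-1,q+1})$ as a separate lemma, via explicit control of the exponential decay of Perron entries along pendant paths and a lower bound $\rho_\alpha(K_k-e)\ge k-3+2\alpha$; it is not a one-line citation. You should expect to do comparable work even at $\alpha=0$.
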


More recently, Liu and Lu \cite{LiLu14} proved the same result for the
spectral radius of $Q\left(  G\right)  $, that is, for $\alpha=1/2$:

\begin{theorem}
[\textbf{Liu,Lu \cite{LiLu14}}]Let $G$ be a graph of order $n$ with
$\mathrm{diam}\left(  G\right)  \geq k$. If $k=1$, then $\rho_{1/2}\left(
G\right)  =\rho_{1/2}(K_{n})$. If $k\geq2$, then
\[
\rho_{1/2}\left(  G\right)  \leq\rho_{1/2}(B_{n-k+2,\lfloor k/2\rfloor,\lceil
k/2\rceil}),
\]
with equality if and only if $G=B_{n-k+2,\lfloor k/2\rfloor,\lceil k/2\rceil}$.
\end{theorem}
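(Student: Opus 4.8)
\medskip
\noindent\textbf{Proof sketch.}\quad I describe a plan for the bound $\rho_{1/2}(G)\le\rho_{1/2}(B_{n-k+2,\lfloor k/2\rfloor,\lceil k/2\rceil})$ together with its equality case; since $A_\alpha(G)$ is, like $\tfrac12 Q(G)$, a nonnegative matrix that is irreducible when $G$ is connected, the plan works verbatim for every $\alpha\in[0,1)$, so I write $\rho_\alpha$ throughout. As usual ``$\mathrm{diam}(G)\ge k$'' presupposes $G$ connected, and the case $k=1$ is the classical fact that $\rho_\alpha(G)\le\rho_\alpha(K_n)$ with equality iff $G=K_n$; assume $k\ge2$. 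Two preliminaries organize the argument. First, adding an edge to a connected graph strictly increases $\rho_\alpha$ (Perron--Frobenius) but can only decrease the diameter; hence, for each $m$, a maximizer of $\rho_\alpha$ over connected graphs of order $n$ of diameter exactly $m$ is \emph{edge-maximal}, meaning $\mathrm{diam}(G+uv)<m$ for every non-edge $uv$. Second---via the standard comparison estimates for $\rho_\alpha$ under pendant-path operations---$\rho_\alpha\bigl(B_{n-m+2,\lfloor m/2\rfloor,\lceil m/2\rceil}\bigr)$ is strictly decreasing in $m$ on $\{2,\dots,n-1\}$. Together these reduce the theorem to proving: if $G$ is edge-maximal of order $n$ with $\mathrm{diam}(G)=k$, then $\rho_\alpha(G)\le\rho_\alpha\bigl(B_{n-k+2,\lfloor k/2\rfloor,\lceil k/2\rceil}\bigr)$, with equality iff $G=B_{n-k+2,\lfloor k/2\rfloor,\lceil k/2\rceil}$.

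So let $G$ be such a graph, with Perron eigenvector $\mathbf x>0$ of $A_\alpha(G)$ and a fixed geodesic $v_0v_1\cdots v_k$; the heart of the proof is to show $G=B_{n-k+2,q,r}$ for some $q,r$ with $q+r=k$. Edge-maximality is rigid: adding a chord of the geodesic must push the diameter below $k$, which forces $d(u,v_k)\le k-2$ for every non-neighbour $u\ne v_1$ of $v_0$, and symmetric statements hold at $v_k$ and along the geodesic. Feeding these into eigenvector ``rotations'' (if $x_a\ge x_b$ and $b$ has a neighbour $z\notin N[a]$, then deleting $bz$ and adding $az$ does not decrease $\rho_\alpha$; choosing such moves so as not to disturb the geodesic keeps $\mathrm{diam}\ge k$), together with Kelmans-type shifts and path-merging moves, one forces the shape: $G$ arises from a graph $H$ on $p=n-k+2$ vertices of diameter $2$, containing two vertices $x,y$ with $d_H(x,y)=2$, by attaching pendant paths of lengths $q-1$ and $r-1$ at $x$ and $y$, where $q+r=k$. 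Since $xy\notin E(H)$, $H$ is a spanning subgraph of $K_p-xy$, so by monotonicity of $\rho_\alpha$ under spanning inclusion (applied to all of $G$, and noting the replacement leaves the diameter equal to $k$) we get $H=K_p-xy$, i.e.\ $G=B_{p,q,r}$. This structural reduction is the main obstacle: every transformation must simultaneously not decrease $\rho_\alpha$ and not create a shortcut between the two far ends of the geodesic, whereas the usual moves that pile edges onto high-$\mathbf x$ vertices tend to collapse the diameter---so one must densify everything \emph{except} the two low-$\mathbf x$ ends and the geodesic joining them. The equality case is extracted from the same analysis.

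It remains to balance the two arms: among the bugs $B_{p,q,r}$ with $p=n-k+2$ and $q+r=k$, $\rho_\alpha$ is strictly maximized by $\{q,r\}=\{\lfloor k/2\rfloor,\lceil k/2\rceil\}$. Since $K_p-xy$ has an automorphism exchanging $x$ and $y$, $\rho_\alpha(B_{p,q,r})=\rho_\alpha(B_{p,r,q})$, so it suffices to show that $q\ge r+2$ implies $\rho_\alpha(B_{p,q-1,r+1})>\rho_\alpha(B_{p,q,r})$---an $A_\alpha$-analogue of the Li--Feng pendant-path lemma. I would prove it via the eigen-equation on a pendant path: along a path of length $\ell$ hung at a vertex $v$, the Perron entries satisfy $(1-\alpha)(y_{j-1}+y_{j+1})=(\rho-2\alpha)y_j$ in the interior and $(1-\alpha)y_{\ell-1}=(\rho-\alpha)y_\ell$ at the leaf, and the explicit (Chebyshev-type) solution of this recurrence expresses the ratio $y_1/x_v$ as a function of $\ell$ that is monotone for $\rho$ in the relevant range; substituting this ``reaction'' of each arm into the eigen-equations at $x$ and $y$ reduces the comparison to a single scalar equation whose root strictly increases when one unit of path is transferred from the longer arm to the shorter. (Alternatively one may invoke the characteristic polynomial of $B_{p,q,r}$ computed in \cite{Roj17}.) Combining the three steps gives $\rho_\alpha(G)\le\rho_\alpha\bigl(B_{n-k+2,\lfloor k/2\rfloor,\lceil k/2\rceil}\bigr)$ with equality iff $G=B_{n-k+2,\lfloor k/2\rfloor,\lceil k/2\rceil}$, and in particular the stated case $\alpha=\tfrac12$.
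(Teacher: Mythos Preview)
Your overall decomposition---first show an extremal $G$ must be a bug $B_{n-k+2,q,r}$, then balance $q,r$---coincides with the paper's. However, both steps as you sketch them have genuine gaps.

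For the structural reduction you invoke ``eigenvector rotations'', ``Kelmans-type shifts'' and ``path-merging moves'' and concede this is ``the main obstacle'', without giving a concrete mechanism that preserves the diameter while forcing the bug shape. The paper sidesteps this difficulty with a single counting observation you do not mention: since $\rho_\alpha(G)\ge\rho_\alpha(K_{n-k+2}-e)>n-k$, the maximum degree of an extremal $G$ is at least $n-k+1$. If $w$ is a vertex of maximum degree and $V_0,\dots,V_k$ are the distance layers from one end of a diametral path, edge-maximality makes each $V_i\cup V_{i+1}$ a clique, so $d(w)=|V_{i-1}|+|V_i|+|V_{i+1}|-1$ for the $i$ with $w\in V_i$; the degree bound then forces $|V_j|=1$ for all $j\notin\{i-1,i,i+1\}$. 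What remains is a short case analysis on $|V_{i\pm1}|$, each case handled by a single edge-move contradiction. Your sketch does not contain this idea, and the generic toolkit you list does not obviously reproduce it, precisely because (as you note) the standard moves tend to collapse the diameter.

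For the balancing step, the paper proves $\rho_\alpha(B_{p,q,r})<\rho_\alpha(B_{p,q-1,r+1})$ for $q\ge r+2$ only when $p\ge4$, and by a fairly delicate chain of inequalities on the Perron entries along the two arms: it first shows $y_r/x_{q-1}>\tfrac32\,y_1/x_1$ via explicit two-sided bounds on the decay rate $\gamma$ along a pendant path, using $\rho\ge\rho_\alpha(K_p-e)\ge p-3+2\alpha\ge 5/2$, and then bounds $y_1/x_1$ from below via the eigen-equations at the two roots. Your Chebyshev/``reaction'' plan is plausible in outline, but the paper explicitly records the general $A_\alpha$ Li--Feng lemma (both one-root and two-root versions) as \emph{open} for arbitrary $\alpha$; its bug-specific argument really uses the lower bound on $\rho$ coming from the $K_p-e$ core. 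So your phrase ``monotone for $\rho$ in the relevant range'' is doing real work and needs to be pinned down.

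One smaller remark: your second preliminary (that the optimal bug value is strictly decreasing in the diameter $m$) is unnecessary. Since a maximizer over $\{\mathrm{diam}\ge k\}$ is edge-maximal, it already has $\mathrm{diam}=k$; the paper argues this way and never compares bugs of different diameters.
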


\begin{figure}[ptb]%
\begin{align*}
\begin{tikzpicture} \tikzstyle{every node}=[draw,circle,fill=black,minimum size=4pt, inner sep=0pt] \draw (-4,-1) node (1) [label=above:$$] {} (-3,-1) node (2) [label=left:$$] {} (-2,-1) node (3) [label=below:$$] {} (1,-1) node (4) [label=below:$$] {} (2,-1) node (5) [label=right:$$] {} (3,-1) node (6) [label=right:$$] {} (4,-1) node (7) [label=below:$$] {} (5,-1) node (8) [label=right:$$] {} (-2,0) node (9) [label=below:$$] {} (-1,1) node (10) [label=below:$$] {} (0,1) node (11) [label=above:$$] {} (1,0) node (12) [label=right:$$] {}; \draw (1)--(2); \draw (2)--(3); \draw (4)--(5); \draw (6)--(5); \draw (7)--(6); \draw (8)--(7); \draw (3)--(9); \draw (3)--(10); \draw (3)--(11); \draw (3)--(12); \draw (4)--(9); \draw (4)--(10); \draw (4)--(11); \draw (4)--(12); \draw (9)--(10); \draw (9)--(11); \draw (9)--(12); \draw (10)--(11);\draw (10)--(12); \draw (11)--(12); \end{tikzpicture}
\end{align*}
\caption{The graph $B_{6,3,5}$}%
\end{figure}
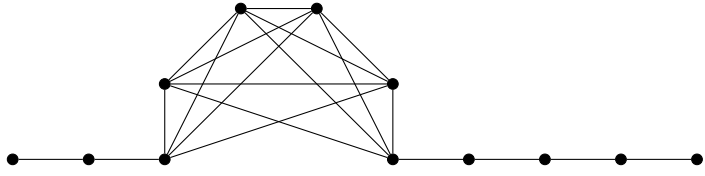

One of the main goals of this paper is to extend the above theorems for all
$\alpha\in\left[  0,1\right)  $:

\begin{theorem}
\label{mth}Let $\alpha\in\left[  0,1\right)  $ and $G$ be a graph of order $n$
with $\mathrm{diam}\left(  G\right)  \geq k$. If $k=1$, then $\rho_{\alpha
}(G)=\rho_{\alpha}(K_{n})$. If $k\geq2$, then
\[
\rho_{\alpha}(G)\leq\rho_{\alpha}(B_{n-k+2,\lfloor k/2\rfloor,\lceil
k/2\rceil}),
\]
with equality if and only if $G=B_{n-k+2,\lfloor k/2\rfloor,\lceil k/2\rceil}$.
\end{theorem}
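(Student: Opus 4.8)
The plan is to let $G$ be any graph of order $n$ with $\mathrm{diam}(G)\ge k$ attaining the maximum value of $\rho_{\alpha}$ (such a $G$ exists since there are finitely many graphs of order $n$, and it is necessarily connected), and to show that $G$ equals the ``balanced bug'' $B_{n-k+2,\lfloor k/2\rfloor,\lceil k/2\rceil}$. Since the balanced bug has order $n$ and diameter exactly $k$, it belongs to the admissible family, so this single identification yields both the inequality $\rho_{\alpha}(G)\le\rho_{\alpha}(B_{n-k+2,\lfloor k/2\rfloor,\lceil k/2\rceil})$ and the fact that equality forces $G=B_{n-k+2,\lfloor k/2\rfloor,\lceil k/2\rceil}$. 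The case $k=1$ requires nothing, since $\rho_{\alpha}(G)\le\rho_{\alpha}(K_{n})$ for every graph $G$ of order $n$ by monotonicity of $\rho_{\alpha}$ under edge addition \cite{Nik17}. Throughout I use that for $\alpha\in[0,1)$ and $G$ connected the matrix $A_{\alpha}(G)$ is nonnegative and irreducible, hence $\rho_{\alpha}(G)$ is a simple eigenvalue with a positive (Perron) eigenvector $x$, and that adding an edge to a connected graph \emph{strictly} increases $\rho_{\alpha}$ --- this is precisely where $\alpha<1$ enters, and it is the basic engine of the argument.

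I begin with two reductions. If $ab$ is a non-edge of a connected graph $H$, then replacing $ab$ in a shortest $s$--$t$ path of $H+ab$ by a shortest $a$--$b$ path of $H$ (of length $\ge2$) shows $d_{H+ab}(s,t)\ge d_{H}(s,t)-1$, so $\mathrm{diam}(H+ab)\ge\mathrm{diam}(H)-1$. Hence $\mathrm{diam}(G)=k$ exactly: if it were larger, adding the edge $v_{0}v_{2}$ along a shortest path $v_{0}v_{1}v_{2}\cdots$ would keep the diameter at least $k$ while strictly increasing $\rho_{\alpha}$, contradicting the choice of $G$. For the same reason $G$ is edge-maximal subject to $\mathrm{diam}\ge k$; equivalently $\mathrm{diam}(G+e)=k-1$ for every non-edge $e$ of $G$. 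Now fix a geodesic $v_{0}v_{1}\cdots v_{k}$ and put $W=V(G)\setminus\{v_{0},\dots,v_{k}\}$, so $|W|=n-k-1$; because $v_{0}\cdots v_{k}$ is a shortest path, for every vertex $w$ the set of indices $i$ with $v_{i}w\in E(G)$ spans a range of at most $2$.

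The core of the proof --- and, I expect, its main obstacle --- is to show that $G$ is a bug: that there is a $j$ with $1\le j\le k-1$ such that the $n-k+2$ vertices $W\cup\{v_{j-1},v_{j},v_{j+1}\}$ induce $K_{n-k+2}$ minus the single edge $v_{j-1}v_{j+1}$ and $G$ has no further edges, so that $G=B_{n-k+2,\,j,\,k-j}$. This must be extracted from edge-maximality together with Rayleigh-quotient comparisons using $x$. Roughly: (i) one shows that all vertices of $W$ attach to the geodesic inside one common block $\{v_{j-1},v_{j},v_{j+1}\}$ of three consecutive vertices --- this is the step that genuinely needs the Perron vector, via Kelmans-type rearrangements of $W$ that do not drop the diameter below $k$ and do not decrease $\rho_{\alpha}$; (ii) one shows that every edge inside $W\cup\{v_{j-1},v_{j},v_{j+1}\}$ other than $v_{j-1}v_{j+1}$ may be added without shortening the $v_{0}$--$v_{k}$ distance below $k$, whereas adding $v_{j-1}v_{j+1}$ creates the walk $v_{0}\cdots v_{j-1}v_{j+1}\cdots v_{k}$ of length $k-1$, so edge-maximality forces exactly this clique-minus-an-edge; and (iii) one shows likewise that no vertex of $W$ meets $\{v_{0},\dots,v_{j-2}\}\cup\{v_{j+2},\dots,v_{k}\}$ and that the two pendant paths carry no chords. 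Maintaining $\mathrm{diam}\ge k$ throughout these rearrangements is the delicate part.

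It remains to find the best bug. The clique-minus-an-edge $K_{n-k+2}-uv$ has an automorphism swapping the attachment vertices $u$ and $v$, so the bugs $B_{n-k+2,q,r}$ with $q+r=k$ form, up to isomorphism, a one-parameter family indexed by $\{q,r\}$, and I would conclude with a path-balancing lemma for $A_{\alpha}$: if $1\le q\le r-2$, then detaching the pendant endpoint of the length-$r$ path of $B_{n-k+2,q,r}$ and reattaching it to the far end of the length-$q$ path strictly increases $\rho_{\alpha}$. This is the standard estimate: writing $x$ for the Perron vector of $B_{n-k+2,q,r}$ and using the eigenvalue equation $\rho_{\alpha}x_{t}=\alpha\,d(t)\,x_{t}+(1-\alpha)\sum_{s\sim t}x_{s}$ along the two pendant paths, the $u\leftrightarrow v$ symmetry of the clique together with the fact that $x$ is monotone along each pendant path (increasing toward the clique) gives that the endpoint of the shorter path carries strictly more weight than the penultimate vertex of the longer path; the described transfer then strictly raises $x^{T}A_{\alpha}(\,\cdot\,)x/(x^{T}x)$, hence strictly raises $\rho_{\alpha}$. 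Iterating this transfer drives every unbalanced bug strictly below $B_{n-k+2,\lfloor k/2\rfloor,\lceil k/2\rceil}$, which combined with the previous paragraph gives $G=B_{n-k+2,\lfloor k/2\rfloor,\lceil k/2\rceil}$ and finishes the proof.
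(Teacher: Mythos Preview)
Your overall architecture matches the paper's: reduce to an edge-maximal $G$ with $\mathrm{diam}(G)=k$, show $G$ is a bug $B_{n-k+2,p,k-p}$, then prove the balanced bug maximizes $\rho_{\alpha}$. However, both of the substantive steps are not actually carried out in your proposal, and in the second one your suggested justification is incorrect.

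For the structural step you only give a programme (``Kelmans-type rearrangements that do not drop the diameter'') and concede that maintaining $\mathrm{diam}\ge k$ is ``the delicate part.'' The paper's argument is quite different from your outline and avoids iterated rearrangements: it partitions $V(G)$ into distance classes $V_{0},\dots,V_{k}$ from one end of a diametral path, uses edge-maximality to see that each $V_{i}\cup V_{i+1}$ is a clique, and then exploits the lower bound $\Delta(G)\ge\rho_{\alpha}(G)>\rho_{\alpha}(K_{n-k+2}-e)>n-k$ to force all but three consecutive classes to be singletons. A short case analysis (on whether $|V_{i-1}|$ or $|V_{i+1}|$ exceeds $1$) then pins down the bug. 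Your sketch would need a genuinely new argument to be made to work.

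The more serious issue is the path-balancing lemma. You assert that ``the $u\leftrightarrow v$ symmetry of the clique together with the fact that $x$ is monotone along each pendant path'' yields that the pendant endpoint $v_{q}$ of the shorter path carries more weight than the penultimate vertex $u_{p-1}$ of the longer path. This does not follow. The graph $B_{n-k+2,p,q}$ with $p\ne q$ has no automorphism swapping $u$ and $v$, so the symmetry of $K_{n-k+2}-e$ tells you nothing about the Perron entries $x_{1}=x_{u}$ and $y_{1}=x_{v}$ in the full graph; and monotonicity along each path only gives $y_{q}<y_{1}$ and $x_{p-1}<x_{1}$, which is useless for comparing $y_{q}$ to $x_{p-1}$. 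Establishing $y_{q}\ge x_{p-1}$ is in fact the main technical work of the paper: it requires an exponential decay estimate along pendant paths (via the quadratic $(1-\alpha)X^{2}-(\rho-2\alpha)X+(1-\alpha)=0$), matching lower bounds obtained from the exact linear recurrence, a lower bound $\rho_{\alpha}(K_{n-k+2}-e)\ge n-k-1+2\alpha$, and a separate argument that $y_{1}/x_{1}\ge 2/3$ coming from the eigen-equations at $u$ and $v$. None of this is a ``standard estimate,'' and your proposal gives no indication of how to obtain the needed inequality.
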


A related extremal result is about connected graphs with given clique number.
Denote by $PK_{p,q}$ the graph obtained by joining an end-vertex of the path
$P_{p}$ to a vertex of the complete graph $K_{q}$. In \cite{StHa08},
Stevanovi\'{c} and Hansen proved the following theorem:

\begin{theorem}
\label{thHS}If $G$ is a connected graph of order $n$ with clique number
$\omega\geq2$, then%
\[
\rho_{0}\left(  G\right)  \geq\rho_{0}\left(  PK_{n-\omega,\omega}\right)  ,
\]
with equality if and only if $G=PK_{n-\omega,\omega}$.
\end{theorem}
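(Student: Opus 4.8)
The plan is to show that $PK_{n-\omega,\omega}$ is the \emph{unique} minimiser of $\rho_0$ over connected graphs of order $n$ with clique number $\omega$, by driving an arbitrary such graph $G$ towards it through a chain of transformations, each of which either fixes the graph or passes to another graph of the class with strictly smaller $\rho_0$. If $n=\omega$ then $G=K_n=PK_{0,\omega}$ and there is nothing to prove, so assume $n>\omega$. I would lean on two facts throughout: strict monotonicity of the Perron root, $\rho_0(H-e)<\rho_0(H)$ whenever $H$ and $H-e$ are both connected; and the test-vector inequality $\rho_0(H')\ge y^{\top}A(H')y$ for any unit vector $y$. I also use that a connected graph properly containing $K_\omega$ with $\omega\ge 3$ has $\rho_0>\omega-1\ge 2$, so I may write $\rho_0=2\cosh\theta$ with $\theta>0$.

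\emph{Step 1: edge reduction.} Fix a clique $W\cong K_\omega$ in $G$. Deleting an edge outside $W$ keeps $W$ intact, hence preserves the clique number, so it suffices to treat $G$ edge-minimal, in which case every edge outside $W$ is a bridge. For $\omega\ge 3$, $W$ is $2$-connected and so lies in a single block $B$; if $B\neq W$ then $B$ contains an edge outside $E(W)$ lying on a cycle of $B$ — not a bridge, a contradiction. Hence $B=W$, and $G$ is $K_\omega$ with a rooted tree attached at each of its vertices. (For $\omega=2$ the same argument shows $G$ is a tree, and the claim becomes the classical fact that $P_n=PK_{n-2,2}$ uniquely minimises $\rho_0$ among $n$-vertex trees, which is recovered from Step 2.)

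\emph{Step 2: path reduction.} If some attached tree is not a single pendant path at its root, pick a vertex $x$ at maximum distance from $W$ with $\deg x\ge 3$, or else a clique vertex carrying two pendant paths; in either case two pendant paths hang from $x$, and by the Li--Feng lemma, merging them into one longer pendant path at $x$ strictly decreases $\rho_0$ while preserving connectedness and the clique number. Iterating, I reduce to $G=K_\omega$ carrying pendant paths of lengths $\ell_1\ge\cdots\ge\ell_j\ge 1$ at distinct clique vertices $w_1,\dots,w_j$, with $1\le j\le\omega$ and $\ell_1+\cdots+\ell_j=n-\omega$. If $j=1$ this is $PK_{n-\omega,\omega}$ and we are done; so assume $j\ge 2$.

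\emph{Step 3: concentrating all paths at one vertex (the crux).} Let $H=PK_{n-\omega,\omega}$, with pendant path $w_1,c_1,\dots,c_{n-\omega}$, unit Perron vector $z$, and $\rho_0(H)=\rho=2\cosh\theta$, $\theta>0$. By symmetry the $\omega-1$ pathless clique vertices of $H$ share the common value $\zeta=z_{w_1}/(\rho-\omega+2)$, and solving the pendant-path recurrence gives $z_{c_i}=z_{w_1}\,\sinh\!\big((n-\omega+1-i)\theta\big)/\sinh\!\big((n-\omega+1)\theta\big)$. Now build $G$ from $H$ by, for $i=1,\dots,j-1$, deleting the edge $\{c_{L_i},c_{L_i+1}\}$ and adding $\{w_{i+1},c_{L_i+1}\}$, where $L_i=\ell_1+\cdots+\ell_i$; this severs the long path into $j$ segments of lengths $\ell_1,\dots,\ell_j$ and hangs them on $w_1,\dots,w_j$, producing exactly $G$. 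Since this amounts to removing $j-1$ edges and adding $j-1$ edges, all distinct and distinct from the untouched ones,
\[
z^{\top}\big(A(G)-A(H)\big)z=\sum_{i=1}^{j-1}2\,z_{c_{L_i+1}}\big(\zeta-z_{c_{L_i}}\big),
\]
so everything comes down to showing each bracket is positive, i.e.\ $\zeta>z_{c_{L_i}}$. With $N:=n-\omega+1-L_i$ (so $N\ge 2$ and $L_i\ge 1$) this reads $\sinh((N+L_i)\theta)>(\rho-\omega+2)\sinh(N\theta)$; and indeed $\sinh((N+L_i)\theta)\ge\sinh((N+1)\theta)=\sinh(N\theta)\cosh\theta+\cosh(N\theta)\sinh\theta>\sinh(N\theta)\,e^{\theta}$, while $\rho-\omega+2\le\rho-1=e^{\theta}+e^{-\theta}-1<e^{\theta}$, so the inequality holds. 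Hence $\rho_0(G)\ge z^{\top}A(G)z>z^{\top}A(H)z=\rho_0(PK_{n-\omega,\omega})$.

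Assembling the three steps, every connected $G$ of order $n$ with clique number $\omega$ satisfies $\rho_0(G)\ge\rho_0(PK_{n-\omega,\omega})$, with equality only if no transformation was ever invoked, i.e.\ $G=PK_{n-\omega,\omega}$, which settles the theorem. The only genuinely delicate point is the eigenvector comparison $\zeta>z_{c_{L_i}}$ of Step 3; it is exactly where the hypothesis $\omega\ge 3$ (forcing $\theta>0$) is used, everything else being routine block-theory and the standard Li--Feng lemma. The same three-step scheme should go through for $\rho_\alpha$, $\alpha\in[0,1)$ — $A_\alpha$ is still entrywise-monotone under edge addition, and its pendant-path recurrence is again constant-coefficient linear, with $\rho$ replaced on internal path vertices by $(\rho-2\alpha)/(1-\alpha)>2$.
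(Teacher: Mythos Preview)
Your proof is correct and follows the same three–step architecture as the paper's proof of the more general Theorem~\ref{thcl} (of which Theorem~\ref{thHS} is the case $\alpha=0$): (1) pass to an edge-minimal graph, which forces the shape ``$K_\omega$ with rooted trees attached''; (2) flatten each tree to a pendant path via the Li--Feng lemma (the paper's Proposition~\ref{pro5}); (3) show that several pendant paths can be consolidated into one with a strict drop in the index, by taking the Perron vector of the consolidated graph as a test vector and comparing the entry at a free clique vertex with the entry at the relevant path vertex.

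The only substantive difference is in how Step~3 is executed. You write $\rho_0=2\cosh\theta$, use the closed form $z_{c_i}=z_{w_1}\sinh((n-\omega+1-i)\theta)/\sinh((n-\omega+1)\theta)$ for the path entries, and prove directly that $\zeta>z_{c_{L_i}}$ via $\sinh((N+1)\theta)>e^{\theta}\sinh(N\theta)$ together with $\rho-\omega+2<e^{\theta}$; you also handle all $j$ pendant paths in a single edge-swap. The paper instead works two paths at a time, uses the decay bound of Proposition~\ref{pro1} ($x_i>\gamma x_{i+1}$, where $\gamma=e^{\theta}$ when $\alpha=0$), and derives the companion inequality $x_u\ge\gamma^{-1}x_1$ by a short contradiction argument from the eigenequations at clique vertices. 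These are two packagings of the same comparison: both hinge on $1/(\rho-\omega+2)>\gamma^{-1}$, which for $\alpha=0$ is exactly your $\rho-\omega+2<e^{\theta}$. Your hyperbolic calculation is slightly cleaner in the $\alpha=0$ case; the paper's $\gamma$-formalism is what lets the same argument go through uniformly for all $\alpha\in[0,1)$, which is precisely the generalisation you sketch at the end.
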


A different (and more involved) proof of Theorem \ref{thHS} was given in
\cite{ZHG14} by Zhang, Huang, and Guo, who, in fact, determined the largest
four values of $\rho_{0}\left(  G\right)  $.

We generalize Theorem \ref{thHS} for any $\alpha\in\left[  0,1\right)  $:

\begin{theorem}
\label{thcl}Let $\alpha\in\left[  0,1\right)  $. If $G$ is a connected graph
of order $n$ with clique number $\omega\geq2$, then%
\[
\rho_{\alpha}\left(  G\right)  \geq\rho_{\alpha}\left(  PK_{n-\omega,\omega
}\right)  ,
\]
with equality if and only if $G=PK_{n-\omega,\omega}$
\end{theorem}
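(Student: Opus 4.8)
The plan is to prove Theorem~\ref{thcl} by a two-step reduction argument: first show that among all connected graphs of order $n$ with clique number $\omega$, the minimizer of $\rho_\alpha$ must have a very restricted structure, and then identify that structure as $PK_{n-\omega,\omega}$. Fix a connected graph $G$ of order $n$ with $\omega(G)=\omega\geq 2$ and let $K$ be a clique of size $\omega$ in $G$. A standard tool here is the effect of edge operations on $\rho_\alpha$: it is known (and follows from the Perron--Frobenius theory for nonnegative irreducible matrices, applied to $A_\alpha$ when $\alpha<1$) that adding an edge to a connected graph strictly increases $\rho_\alpha$, and that certain ``edge-shifting'' or ``rotation'' operations (Kelmans-type transformations) move $\rho_\alpha$ monotonically. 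Since we want a lower bound, we want to repeatedly apply operations that strictly decrease $\rho_\alpha$ while preserving connectivity, the order $n$, and the clique number $\omega$, driving $G$ toward $PK_{n-\omega,\omega}$.

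Concretely, the first step would be to argue that we may assume $G$ has exactly $\binom{\omega}{2}$ edges inside $K$ and that removing any edge not forced by connectivity strictly decreases $\rho_\alpha$; hence the extremal graph is a tree with one clique $K_\omega$ glued in — that is, $G$ consists of $K_\omega$ with trees hanging off its vertices, and in fact (by a further edge-deletion/connectivity argument) the extremal $G$ is $K_\omega$ with a single tree $T$ attached, where $|T| = n-\omega$ vertices are distributed among the vertices of $K$. The second step is to show that this tree must in fact be a path attached at a single vertex of $K$. For this I would use the Perron vector $\mathbf{x}$ of $A_\alpha(G)$ (which is positive since $\alpha<1$ makes $A_\alpha$ irreducible for connected $G$) together with the Rayleigh-type characterization $\rho_\alpha(G)=\max_{\|\mathbf{z}\|=1}\langle A_\alpha(G)\mathbf{z},\mathbf{z}\rangle$. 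A local swap that moves a branch from a vertex of smaller Perron weight to... — more precisely, grafting operations on trees (the classical Li--Feng / Wu--Xiao--Hong style lemmas, now generalized to $A_\alpha$) show that to \emph{minimize} the index one should make the appended tree as ``path-like'' and as ``far from the clique'' as possible: any branching vertex in $T$, or any second vertex of $K$ carrying part of $T$, can be removed by a transformation that strictly lowers $\rho_\alpha$. Iterating, all of $T$ collapses onto a single pendent path attached at one vertex of $K_\omega$, which is exactly $PK_{n-\omega,\omega}$.

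I would organize the write-up around two lemmas. Lemma A: for $\alpha\in[0,1)$ and a connected graph $H$, if $H'$ is obtained from $H$ by deleting an edge whose removal keeps $H$ connected, then $\rho_\alpha(H')<\rho_\alpha(H)$; this is immediate from the fact that $A_\alpha(H)-A_\alpha(H')$ is a nonzero positive semidefinite matrix (it equals $(1-\alpha)$ times the edge's rank-one contribution to $A$ plus $\alpha$ times the diagonal change, which together form the $A_\alpha$ of a single-edge graph — a positive semidefinite matrix) combined with Perron--Frobenius monotonicity. Lemma B (the branch-shifting lemma): if $\mathbf{x}$ is the Perron vector of $A_\alpha(H)$ and $u,v$ are two vertices with $x_u\geq x_v$, and $w$ is a neighbor of $v$ with $w\not\sim u$, $w\neq u$, then the graph $H'$ obtained by deleting $vw$ and adding $uw$ satisfies $\rho_\alpha(H')\geq\rho_\alpha(H)$, with the reverse/equality analysis controlled carefully — and a ``pendent-path'' version stating that replacing a rooted tree branch at a vertex by a path of the same size, attached at the end farthest in Perron weight, does not increase the index. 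Then the theorem follows by applying Lemma A to reduce to $K_\omega$-plus-forest, then to $K_\omega$-plus-tree, and Lemma B repeatedly to reduce the tree to a single pendent path at one clique vertex.

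The main obstacle I anticipate is the equality characterization and the correct direction of the inequality in the branch-shifting step: transformations that are natural for \emph{maximizing} the index (pushing mass toward a high-degree hub) go the wrong way here, so one has to be careful that each operation used genuinely \emph{decreases} $\rho_\alpha$ and that the only graph on which no such strict decrease is possible is $PK_{n-\omega,\omega}$. Handling the $\alpha$-dependence uniformly is the other delicate point: the diagonal term $\alpha D$ means edge deletions change degrees, so the ``single-edge $A_\alpha$ matrix'' that one subtracts is $\alpha\,\mathrm{diag}(\mathbf{1}_u+\mathbf{1}_v)+(1-\alpha)(\mathbf{1}_u\mathbf{1}_v^{\top}+\mathbf{1}_v\mathbf{1}_u^{\top})$, whose eigenvalues are $\alpha\pm(1-\alpha)$ and $0$; for $\alpha\in[0,1)$ both nonzero eigenvalues, $1$ and $2\alpha-1$... are not both nonnegative when $\alpha<1/2$, so the crude ``subtract a PSD matrix'' argument needs the Perron-vector quadratic-form computation $\rho_\alpha(H)\geq \mathbf{x}^\top A_\alpha(H)\mathbf{x}=\mathbf{x}^\top A_\alpha(H')\mathbf{x}+(\text{positive})$ done directly rather than via matrix positivity. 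I expect that computation to be the technical heart, with everything else being bookkeeping of connectivity and clique number under the moves.
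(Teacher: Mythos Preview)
Your overall architecture matches the paper's: reduce by edge-minimality to $K_\omega$ with pendent trees, flatten each tree to a pendent path, then argue there can be only one such path. The first two steps are routine and your Lemmas A and B are exactly the tools the paper uses (its Lemma \ref{rol}, Proposition \ref{pro4}, Proposition \ref{pro5}).

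The genuine gap is the third step, which you gloss as ``any second vertex of $K$ carrying part of $T$ can be removed by a transformation that strictly lowers $\rho_\alpha$.'' Your Lemma B does not do this on its own. Suppose paths $P_p$ and $P_q$ hang from clique vertices $v$ and $u$; let $F$ be the graph with the single path $P_{p+q-1}$ at $v$. To show $\rho_\alpha(G)>\rho_\alpha(F)$ via your rotation lemma, you must work with the Perron vector $\mathbf{x}$ of $F$ and prove $x_u\geq x_{v_p}$, where $v_p$ is the $p$th vertex along the long path. This is not obvious: in $F$ the vertex $u$ has degree only $\omega-1$ (it lost its path), so a priori $x_u$ could be small, and there is no symmetry to exploit. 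The paper establishes the sharper inequality $x_u\geq\gamma^{-1}x_v$ (with $\gamma$ as in (\ref{gam})), which combined with Proposition \ref{pro1} gives $x_u>x_2\geq x_{v_p}$. Proving $x_u\geq\gamma^{-1}x_v$ takes real work: one writes the eigenequations at $u$ and at a generic clique vertex $w$, shows $x_w\geq x_u$ for all $w\in V(K_\omega)\setminus\{u,v\}$, feeds this back into the eigenequation for $u$ to get $(\rho-\omega+2-\alpha)x_u\geq(1-\alpha)x_v$, and then argues by contradiction that if $x_u<\gamma^{-1}x_v$ one is forced into $\rho-2\omega+4>\sqrt{(\rho-2\alpha)^2-4(1-\alpha)^2}$, which is impossible since $\rho<\Delta(F)=\omega$ (with a separate squaring argument when $\omega=3$).

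You correctly flag that ``the correct direction of the inequality in the branch-shifting step'' is the obstacle, but you do not supply the missing Perron-vector comparison. Without it the proof does not close: Lemma B only moves $\rho_\alpha$ in the desired direction \emph{once} you know which of the two endpoints has the larger Perron entry, and that is precisely the content you still need to provide.
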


The rest of the paper is organized as follows. In Section \ref{spp} we study
the distribution of the entries of Perron vectors of the $\alpha$-index along
pendent paths in graphs. These results are used in Section \ref{smth} and in
Section \ref{sthcl} to carry out the proofs of Theorem \ref{mth} and Theorem
\ref{thcl}. In the last Section \ref{ops} we raise some open problems inspired
by results of Li and Feng \cite{LiFe79}, whose solution could provide new
tools in the study of the $\alpha$-index.

\section{\label{snot}Notation and preliminaries}

Given a graph $G$ and a vertex $u\in V\left(  G\right)  $, we write
$\Gamma_{G}\left(  u\right)  $ for the set of neighbors of $u$ and set
$d_{G}\left(  u\right)  :=\left\vert \Gamma_{G}\left(  u\right)  \right\vert
$. As usually, $P_{n}$ and $K_{n}$ denote the path and the complete graph of
order $n$, and $K_{n}-e$ stands for $K_{n}$ with an edge removed.

For an $n\times n$ symmetric matrix $A=\left[  a_{i,j}\right]  $ and a vector
$\mathbf{x}:=\left(  x_{1},\ldots,x_{n}\right)  $, we write $\left\langle
A\mathbf{x},\mathbf{x}\right\rangle $ for the quadratic form of $A$, i.e.,%
\[
\left\langle A\mathbf{x},\mathbf{x}\right\rangle =\sum_{i,j}a_{i,j}x_{i}%
x_{j}.
\]

In our proofs, we frequently use the following lemma that generalizes results
known for the adjacency matrix and the signless Laplacian of graphs.

\begin{lemma}
\label{rol}Let $\alpha\in\left[  0,1\right)  $ and let $G$ be a graph of order
$n$. Suppose that $u,v\in V(G)$ and $S\subset V(G)$ satisfy $u,v\notin S$ and
for every $w\in S$, $\{u,w\}\in E(G)$ and $\{v,w\}\notin E\left(  G\right)  $.
Let $H$ be the graph obtained by deleting the edges $\{u,w\}$ and adding the
edges $\{v,w\}$ for all $w\in S$. If $S$ is nonempty and there is a positive
eigenvector $\left(  x_{1},\ldots,x_{n}\right)  $ to $\rho_{\alpha}\left(
G\right)  $ such that $x_{v}\geq x_{u}$, then
\[
\rho_{\alpha}\left(  H\right)  >\rho_{\alpha}\left(  G\right)  .
\]

\end{lemma}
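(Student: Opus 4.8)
The plan is to use the Rayleigh quotient characterization of $\rho_\alpha$ together with the fact that $A_\alpha(H)$ and $A_\alpha(G)$ differ only in a controlled way on the vertices $u$, $v$, and the set $S$. Let $\mathbf{x}=(x_1,\dots,x_n)$ be the given positive Perron eigenvector of $A_\alpha(G)$, normalized so that $\langle \mathbf{x},\mathbf{x}\rangle=1$. Since $\rho_\alpha(H)\geq\langle A_\alpha(H)\mathbf{x},\mathbf{x}\rangle$, it suffices to show that $\langle A_\alpha(H)\mathbf{x},\mathbf{x}\rangle>\rho_\alpha(G)=\langle A_\alpha(G)\mathbf{x},\mathbf{x}\rangle$, i.e. that the difference of quadratic forms is strictly positive. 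Writing out $\langle A_\alpha(G)\mathbf{x},\mathbf{x}\rangle=\alpha\sum_{i}d_G(i)x_i^2+2(1-\alpha)\sum_{\{i,j\}\in E(G)}x_ix_j$, I would compute the contribution of the edge swap directly: moving each edge $\{u,w\}$ to $\{v,w\}$ decreases $d(u)$ by $|S|$, increases $d(v)$ by $|S|$, leaves all other degrees unchanged (in particular $d(w)$ for $w\in S$ is unchanged), and replaces each product $x_ux_w$ by $x_vx_w$ in the adjacency sum. Hence
\[
\langle A_\alpha(H)\mathbf{x},\mathbf{x}\rangle-\langle A_\alpha(G)\mathbf{x},\mathbf{x}\rangle
=\alpha|S|\,(x_v^2-x_u^2)+2(1-\alpha)(x_v-x_u)\sum_{w\in S}x_w.
\]
Since $\alpha\in[0,1)$, $x_v\geq x_u>0$, $S\neq\emptyset$, and all $x_w>0$, the second term is nonnegative and the whole expression is at least $2(1-\alpha)(x_v-x_u)\sum_{w\in S}x_w\geq 0$; this already gives $\rho_\alpha(H)\geq\rho_\alpha(G)$.

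To upgrade to a strict inequality I would argue in two cases. If $x_v>x_u$, then the second term above is strictly positive (here I use $1-\alpha>0$, $S$ nonempty, $x_w>0$), so $\rho_\alpha(H)>\rho_\alpha(G)$ immediately. If $x_v=x_u$, the quadratic forms agree, so $\rho_\alpha(H)\geq\langle A_\alpha(H)\mathbf{x},\mathbf{x}\rangle=\rho_\alpha(G)$; to rule out equality I would suppose $\rho_\alpha(H)=\rho_\alpha(G)$, which would force $\mathbf{x}$ to be a Perron eigenvector of $A_\alpha(H)$ as well (equality in the Rayleigh quotient). Then I compare the two eigenvalue equations at vertex $u$: in $G$ we have $\rho_\alpha x_u=\alpha d_G(u)x_u+(1-\alpha)\sum_{t\in\Gamma_G(u)}x_t$, while in $H$ we have $\rho_\alpha x_u=\alpha d_H(u)x_u+(1-\alpha)\sum_{t\in\Gamma_H(u)}x_t$; subtracting, and using $\Gamma_G(u)=\Gamma_H(u)\cup S$ (disjoint union) together with $d_G(u)-d_H(u)=|S|$, yields $0=\alpha|S|x_u+(1-\alpha)\sum_{w\in S}x_w$, whose right-hand side is strictly positive since $x_u>0$, $x_w>0$, $S\neq\emptyset$, and $\alpha\in[0,1)$ — a contradiction. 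Hence $\rho_\alpha(H)>\rho_\alpha(G)$ in this case as well.

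The only mild subtlety, and the step I would be most careful about, is the claim that equality $\rho_\alpha(H)=\langle A_\alpha(H)\mathbf{x},\mathbf{x}\rangle$ forces $\mathbf{x}$ to be an eigenvector for $\rho_\alpha(H)$; this is the standard fact that a unit vector attaining the maximum of the Rayleigh quotient of a symmetric matrix is an eigenvector for its largest eigenvalue, so it is routine. One should also note $H$ is well-defined as a simple graph: for $w\in S$ the edge $\{v,w\}$ is added and was absent in $G$ by hypothesis, and $\{u,w\}$ was present, so no multi-edges or loops arise. With these observations the proof is complete. $\Box$
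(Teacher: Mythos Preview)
Your proof is correct and follows essentially the same approach as the paper's: compute the difference of quadratic forms $\langle A_\alpha(H)\mathbf{x},\mathbf{x}\rangle-\langle A_\alpha(G)\mathbf{x},\mathbf{x}\rangle\geq 0$, then rule out equality of the spectral radii by noting that $\mathbf{x}$ would have to be an eigenvector of $A_\alpha(H)$ as well and deriving a contradiction from the eigenequations. The only cosmetic differences are that the paper does not split into the cases $x_v>x_u$ versus $x_v=x_u$ (its argument covers both simultaneously), and it derives the contradiction at vertex $v$ rather than at $u$; both choices work equally well.
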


\begin{proof}
Calculating the quadratic forms $\left\langle A_{\alpha}\left(  H\right)
\mathbf{x},\mathbf{x}\right\rangle $ and $\left\langle A_{\alpha}\left(
G\right)  \mathbf{x},\mathbf{x}\right\rangle $, we see that%
\begin{align*}
\rho_{\alpha}\left(  H\right)  -\rho_{\alpha}\left(  G\right)   &
\geq\left\langle A_{\alpha}\left(  H\right)  \mathbf{x},\mathbf{x}%
\right\rangle -\left\langle A_{\alpha}\left(  G\right)  \mathbf{x}%
,\mathbf{x}\right\rangle \\
&  =\alpha\left(  \left\langle D\left(  H\right)  \mathbf{x},\mathbf{x}%
\right\rangle -\left\langle D\left(  G\right)  \mathbf{x},\mathbf{x}%
\right\rangle \right)  +\left(  1-\alpha\right)  \left(  \left\langle A\left(
H\right)  \mathbf{x},\mathbf{x}\right\rangle -\left\langle A\left(  G\right)
\mathbf{x},\mathbf{x}\right\rangle \right) \\
&  =-\alpha\left\vert S\right\vert x_{u}^{2}+\left\vert S\right\vert \alpha
x_{v}^{2}+2\left(  1-\alpha\right)  \sum_{w\in S}x_{w}\left(  -x_{u}%
+x_{v}\right) \\
&  =\left(  x_{v}-x_{u}\right)  \sum_{w\in S}\left(  \alpha x_{v}+\alpha
x_{u}+2\left(  1-\alpha\right)  x_{w}\right)  \geq0.
\end{align*}
However, the equality $\rho_{\alpha}\left(  H\right)  =\rho_{\alpha}\left(
G\right)  $ is not possible, for otherwise $\mathbf{x}$ is a positive
eigenvector to $\rho_{\alpha}\left(  H\right)  $, leading to the contradictory
equations%
\begin{align*}
\rho_{\alpha}\left(  G\right)  x_{v}  &  =\alpha d_{G}\left(  v\right)
x_{v}^{2}+\left(  1-\alpha\right)  \sum_{i\in\Gamma_{G}\left(  v\right)
}x_{i}\\
\rho_{\alpha}\left(  H\right)  x_{v}  &  =\alpha d_{H}\left(  v\right)
x_{v}^{2}+\left(  1-\alpha\right)  \sum_{i\in\Gamma_{H}\left(  v\right)
}x_{i}>\alpha d_{G}\left(  v\right)  x_{v}^{2}+\left(  1-\alpha\right)
\sum_{i\in\Gamma_{G}\left(  v\right)  }x_{i}.
\end{align*}

\end{proof}

\section{\label{spp}Graphs with pendent paths}

Let $G$ be a connected graph containing a path $P$ as a subgraph. We say that
$P$ is a \emph{pendent path} if one of its ends is a cut vertex of $G$; call
this vertex the \emph{root} of $P$. Note that a graph can have multiple
pendent paths, which may share roots; e.g., the graph $B_{p,q,r}$ has two
pendent paths. Graphs with pendent paths arise often in spectral extremal
graph theory; thus it is worth to investigate their $\alpha$-index in some generality.

For any vertex $u$ of a connected graph $G$, let $G_{p,q}\left(  u\right)  $
be the graph obtained by attaching the paths $P_{p}$ and $P_{q}$ to $u$.
Similarly, for any two vertices $u$ and $v$ of a connected graph $G$, let
$G_{p,q}\left(  u,v\right)  $ be the graph obtained by attaching the paths
$P_{p}$ to $u$ and $P_{q}$ to $v$.

Let $\alpha\in\left[  0,1\right)  $. If $G$ is a graph with a pendent path $P$
and $\rho_{\alpha}\left(  G\right)  =\rho\geq2$, then the distribution of the
entries of an eigenvector to $\rho$ along $P$ is well determined. To prove
this fact, we use the crucial equation
\begin{equation}
X^{2}-\frac{\rho-2\alpha}{1-\alpha}X+1=0. \label{ceq}%
\end{equation}
Thus, write $\gamma$ for the root of (\ref{ceq})%
\begin{equation}
\gamma:=\frac{1}{2}\left(  \frac{\rho-2\alpha}{1-\alpha}+\sqrt{\left(
\frac{\rho-2\alpha}{1-\alpha}\right)  ^{2}-4}\right)  , \label{gam}%
\end{equation}
and note that $\gamma$ is real since $\rho\geq2$; moreover, $\gamma\geq1$,
with strict inequality if $\rho>2$. Note also that the other root of
(\ref{ceq}) is equal to $\gamma^{-1}$.

Our first statement shows that the entries of an eigenvector to $\rho_{\alpha
}\left(  G\right)  $ decay exponentially along pendent paths in $G$.

\begin{proposition}
\label{pro1}Let $\alpha\in\left[  0,1\right)  $, and let $G$ be a graph with
$\rho:=\rho_{\alpha}\left(  G\right)  \geq2$. Let $P=\left(  u_{1}%
,\ldots,u_{r+1}\right)  $ be a pendent path \ in $G$ with root $u_{1}$. Let
$x_{1},\ldots,x_{r+1}$ be the entries of a positive unit eigenvector to
$\rho_{\alpha}\left(  G\right)  $ corresponding to $u_{1},\ldots,u_{r+1}$. If
$\gamma$ is defined by (\ref{gam}), then, for every $i=1,\ldots,r$, we have%
\begin{equation}
x_{i}>\gamma x_{i+1}. \label{ub1}%
\end{equation}

\end{proposition}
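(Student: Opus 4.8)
The plan is to combine the eigen-equations of $A_{\alpha}(G)$ at the vertices of the pendent path with a downward induction running from the far end $u_{r+1}$ back toward the root $u_{1}$. Write $t:=\tfrac{\rho-2\alpha}{1-\alpha}$, so that, by (\ref{ceq})--(\ref{gam}), $\gamma$ is the larger root of $X^{2}-tX+1=0$; in particular $\gamma+\gamma^{-1}=t$, $\gamma\geq1$ (here $\rho\geq2$ is used), and hence $t-\gamma=\gamma^{-1}>0$. Since $u_{r+1}$ has degree $1$ in $G$ and each $u_{j}$ with $2\leq j\leq r$ has degree $2$ with neighbours $u_{j-1},u_{j+1}$, the eigen-equation $\rho x_{w}=\alpha d_{G}(w)x_{w}+(1-\alpha)\sum_{z\in\Gamma_{G}(w)}x_{z}$ specializes to
\[
(\rho-\alpha)\,x_{r+1}=(1-\alpha)\,x_{r}\qquad\text{and}\qquad x_{j-1}+x_{j+1}=t\,x_{j}\quad(2\leq j\leq r).
\]

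First I would settle the base case $i=r$, i.e.\ $x_{r}>\gamma x_{r+1}$. The first displayed equation gives $x_{r}=\tfrac{\rho-\alpha}{1-\alpha}\,x_{r+1}=\bigl(t+\tfrac{\alpha}{1-\alpha}\bigr)x_{r+1}\geq t\,x_{r+1}>\gamma x_{r+1}$, where the last inequality holds because $x_{r+1}>0$ and $t-\gamma=\gamma^{-1}>0$. (If $r=1$ this already proves the proposition.) Then I would run the downward induction: assuming $x_{i+1}>\gamma x_{i+2}$ for some $i$ with $1\leq i\leq r-1$, the recurrence at $u_{i+1}$ yields $x_{i}=t\,x_{i+1}-x_{i+2}$, and therefore
\[
x_{i}-\gamma x_{i+1}=(t-\gamma)\,x_{i+1}-x_{i+2}=\gamma^{-1}x_{i+1}-x_{i+2}=\gamma^{-1}\bigl(x_{i+1}-\gamma x_{i+2}\bigr)>0,
\]
which is exactly (\ref{ub1}) for the index $i$. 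This closes the induction and establishes (\ref{ub1}) for all $i=1,\ldots,r$.

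I do not expect a genuine obstacle here: once the recurrence $x_{j-1}+x_{j+1}=t\,x_{j}$ and the identity $t-\gamma=\gamma^{-1}$ are in hand, the inductive step is a single line. The points that need a little care are that $\rho\geq2$ is precisely what makes $\gamma$ real and $\geq1$; that the strict inequality in (\ref{ub1}) is genuine, coming at the bottom step from $\tfrac{\alpha}{1-\alpha}\geq0$ together with $\gamma^{-1}>0$, and in the inductive step from the strict positivity of the entries of the Perron eigenvector; and that the hypothesis ``$P$ is a pendent path with root $u_{1}$'' is being used exactly to guarantee $d_{G}(u_{r+1})=1$ and $d_{G}(u_{j})=2$ for $2\leq j\leq r$, which is what puts the interior eigen-equations into constant-coefficient form.
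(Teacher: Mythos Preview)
Your proof is correct and follows essentially the same approach as the paper's own proof: both argue by downward induction from the leaf $u_{r+1}$, using the eigen-equation at $u_{r+1}$ for the base case and the interior recurrence $x_{j-1}+x_{j+1}=t\,x_{j}$ for the step. Your use of the Vieta identity $t-\gamma=\gamma^{-1}$ makes the inductive step slightly cleaner than the paper's direct manipulation of the radical in (\ref{gam}), and you also keep the inequalities strict throughout, which the paper's write-up handles a bit loosely.
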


\begin{proof}
The eigenequation of $A_{\alpha}\left(  G\right)  $ for $x_{r+1}$ is%
\[
\rho x_{r+1}=\alpha x_{r+1}+\left(  1-\alpha\right)  x_{r},
\]
yielding in turn%
\[
x_{r}=\frac{\rho-\alpha}{1-\alpha}x_{r+1}>\frac{1}{2}\left(  \frac
{\rho-2\alpha}{1-\alpha}+\sqrt{\left(  \frac{\rho-2\alpha}{1-\alpha}\right)
^{2}-4}\right)  x_{r+1}=\gamma x_{r+1}.
\]
Proceeding by induction, the eigenequation of $A_{\alpha}\left(  G\right)  $
for $x_{i}$, together with the induction assumption, gives%
\begin{align*}
\frac{\rho-2\alpha}{1-\alpha}x_{i}  &  =x_{i+1}+x_{i-1}\leq\gamma^{-1}%
x_{i}+x_{i-1}\\
&  =\left(  \frac{\rho-2\alpha}{1-\alpha}-\frac{1}{2}\left(  \frac
{\rho-2\alpha}{1-\alpha}-\sqrt{\left(  \frac{\rho-2\alpha}{1-\alpha}\right)
^{2}-4}\right)  \right)  x_{i}+x_{i-1}.
\end{align*}
Hence,%
\[
x_{i-1}\geq\left(  \frac{\rho-2\alpha}{1-\alpha}+\frac{1}{2}\left(  \frac
{\rho-2\alpha}{1-\alpha}-\sqrt{\left(  \frac{\rho-2\alpha}{1-\alpha}\right)
^{2}-4}\right)  \right)  x_{i}=\gamma x_{i},
\]
completing the proof of Proposition \ref{pro1}.
\end{proof}

Note the following simple, but useful consequence of Proposition \ref{pro1}:

\begin{corollary}
\label{cor0}Given the hypotheses of Proposition \ref{pro1}, we have%
\[
x_{1}>\cdots>x_{r+1}.
\]

\end{corollary}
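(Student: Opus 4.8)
The plan is to derive this directly from Proposition \ref{pro1} together with the elementary bound $\gamma\geq1$ recorded just after the definition \eqref{gam}. First I would invoke Proposition \ref{pro1}, which under exactly the same hypotheses gives $x_i>\gamma x_{i+1}$ for every $i=1,\ldots,r$. Then I would recall that $\rho\geq2$ forces the discriminant $\left(\tfrac{\rho-2\alpha}{1-\alpha}\right)^2-4$ to be nonnegative and that $\tfrac{\rho-2\alpha}{1-\alpha}\geq2$, so by \eqref{gam} we have $\gamma\geq1$.

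The conclusion then follows in one line: since the eigenvector is positive, $x_{i+1}>0$, and hence
\[
x_i>\gamma x_{i+1}\geq x_{i+1}
\]
for each $i=1,\ldots,r$. Chaining these strict inequalities yields $x_1>x_2>\cdots>x_{r+1}$, as claimed.

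There is essentially no obstacle here; the only point worth a sentence is the justification of $\gamma\geq1$, which is already implicit in the text surrounding \eqref{gam}, so the corollary is immediate. If one wanted to be fully self-contained, one could instead bypass $\gamma$ altogether and argue by (backward) induction along $P$ using the eigenequations directly, but reusing Proposition \ref{pro1} is the shortest route.
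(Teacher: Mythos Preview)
Your proposal is correct and matches the paper's intent: the corollary is stated there as an immediate consequence of Proposition~\ref{pro1} with no separate proof, relying on exactly the observation $\gamma\geq1$ (recorded after \eqref{gam}) together with positivity of the Perron vector. There is nothing to add.
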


It turns out the inequality (\ref{ub1}) is quite sharp. First, using linear
recurrences, we obtain more precise information about $x_{1},\ldots,x_{r+1}$:

\begin{lemma}
\label{le1}With the hypotheses of Proposition \ref{pro1}, for every
$i=1,\ldots,r+1$, we have
\begin{equation}
x_{i}=A\gamma^{r+2-i}+B\gamma^{i-r-2}, \label{eq1}%
\end{equation}
where $\left(  A,B\right)  $ is the solution to the linear system%
\begin{align}
X+Y  &  =-\frac{\alpha}{1-\alpha}x_{r+1},\label{sys}\\
X\gamma+Y\gamma^{-1}  &  =x_{r+1}.\nonumber
\end{align}

\end{lemma}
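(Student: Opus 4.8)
The plan is to read a second-order linear recurrence off the eigenequations of $A_\alpha(G)$ along the pendent path $P$, to recall that its general solution is governed by the roots $\gamma,\gamma^{-1}$ of \eqref{ceq}, and then to pin down the two free coefficients by matching the two entries $x_r,x_{r+1}$ at the far end of $P$; the system \eqref{sys} is exactly what those two matching conditions become.

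First I would write down the relevant eigenequations. Since $P=(u_1,\dots,u_{r+1})$ is a pendent path with root $u_1$, for $2\le i\le r$ the vertex $u_i$ has degree $2$ with $\Gamma_G(u_i)=\{u_{i-1},u_{i+1}\}$, so
\[
\rho x_i=2\alpha x_i+(1-\alpha)(x_{i-1}+x_{i+1}),\qquad 2\le i\le r,
\]
equivalently $x_{i-1}+x_{i+1}=\frac{\rho-2\alpha}{1-\alpha}x_i$; while $u_{r+1}$ has degree $1$ with unique neighbour $u_r$, so $\rho x_{r+1}=\alpha x_{r+1}+(1-\alpha)x_r$, i.e.
\[
x_r=\frac{\rho-\alpha}{1-\alpha}x_{r+1}.
\]
The recurrence for $2\le i\le r$ has characteristic equation \eqref{ceq}, with roots $\gamma$ and $\gamma^{-1}$, which are distinct since $\rho>2$; hence its general solution is $C\gamma^i+D\gamma^{-i}$, and up to reindexing this is exactly the form \eqref{eq1}. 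So it is enough to produce constants $A,B$ such that $y_i:=A\gamma^{r+2-i}+B\gamma^{i-r-2}$ satisfies $y_r=x_r$ and $y_{r+1}=x_{r+1}$: the sequence $(y_i)$ already satisfies $y_{i-1}+y_{i+1}=\frac{\rho-2\alpha}{1-\alpha}y_i$ for every $i$, so running this recurrence downward from $i=r$ (where $(x_i)$ obeys the same relation) forces $y_i=x_i$ for all $i=1,\dots,r+1$.

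Next I would verify that the pair $(A,B)$ defined by \eqref{sys} is the right one. Evaluating $y_i$ at $i=r+1$ gives $y_{r+1}=A\gamma+B\gamma^{-1}$, so the second equation of \eqref{sys} is precisely $y_{r+1}=x_{r+1}$. Evaluating at $i=r$ gives $y_r=A\gamma^2+B\gamma^{-2}$; putting $a:=A\gamma$, $b:=B\gamma^{-1}$ and using $\gamma+\gamma^{-1}=\frac{\rho-2\alpha}{1-\alpha}$ (the sum of the roots of \eqref{ceq}),
\[
y_r=a\gamma+b\gamma^{-1}=(a+b)(\gamma+\gamma^{-1})-(a\gamma^{-1}+b\gamma)=x_{r+1}\cdot\frac{\rho-2\alpha}{1-\alpha}-(A+B),
\]
and substituting $A+B=-\frac{\alpha}{1-\alpha}x_{r+1}$ (the first equation of \eqref{sys}) turns the right-hand side into $\frac{\rho-\alpha}{1-\alpha}x_{r+1}$, which is $x_r$ by the endpoint eigenequation. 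Finally $(A,B)$ is unique, because the coefficient matrix of \eqref{sys} has determinant $\gamma^{-1}-\gamma\neq0$.

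Everything here is routine recurrence manipulation and $2\times2$ linear algebra; the only place that needs care is the borderline $\rho=2$, where $\gamma=1$, \eqref{ceq} acquires a double root, the general solution of the recurrence becomes $A+Bi$ rather than \eqref{eq1}, and \eqref{sys} degenerates. So the identity \eqref{eq1} should be read in the range $\rho>2$ (equivalently $\gamma>1$), which is harmless since only that regime is used in the later applications. Keeping that distinction straight, rather than any genuine difficulty, is the main thing to watch.
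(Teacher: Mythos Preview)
Your proof is correct and follows essentially the same route as the paper: both read off the second-order recurrence $x_{i-1}+x_{i+1}=\frac{\rho-2\alpha}{1-\alpha}x_i$ from the eigenequations, invoke the general solution in terms of $\gamma,\gamma^{-1}$, and fix the two constants from the data at the far end of $P$; the paper does this by reindexing $z_i:=x_{r+2-i}$ and adjoining an artificial $z_0=-\frac{\alpha}{1-\alpha}x_{r+1}$, whereas you match directly at $i=r,r+1$, which is a cosmetic difference. Your remark about the degenerate case $\rho=2$ (where $\gamma=1$ and \eqref{sys} has no solution) is a valid caveat that the paper leaves implicit.
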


\begin{proof}
Let $\left(  A,B\right)  $ be the (unique) solution to system (\ref{sys}). If
$r=1$, then%
\[
x_{2}=A\gamma+B\gamma^{-1},
\]
and
\begin{align*}
A\gamma^{2}+B\gamma^{-2}  &  =\frac{\rho-2\alpha}{1-\alpha}A\gamma
-A+\frac{\rho-2\alpha}{1-\alpha}B\gamma^{-1}-B\\
&  =\frac{\rho-2\alpha}{1-\alpha}x_{2}+\frac{\alpha}{1-\alpha}x_{2}=x_{1},
\end{align*}
proving the assertion for $r=1$. If $r>1$, define the sequence $z_{0}%
,\ldots,z_{r+1}$ by letting
\begin{equation}
z_{0}:=-\frac{\alpha}{1-\alpha}x_{r+1}, \label{z0}%
\end{equation}
and $z_{i}:=x_{r+2-i}$ for each $i=1,\ldots,r+1$. For $i=2,\ldots,r$ the
eigenequations of $A_{\alpha}\left(  G\right)  $ imply that
\[
\frac{\rho-2\alpha}{1-\alpha}x_{i}=x_{i+1}+x_{i-1}.
\]
Hence, $z_{1},\ldots,z_{r+1}$ satisfy the linear recurrence%
\[
z_{i+1}=\frac{\rho-2\alpha}{1-\alpha}z_{i}-z_{i-1}.
\]
for $i=2,\ldots,r$. Note that the choice of $z_{0}$ in equation (\ref{z0})
ensures that the above equality holds also for $i=1$. Hence, as known from the
theory of linear recurrences, for $i=1,\ldots,r+1$, we have
\[
z_{i}=A\gamma^{i}+B\gamma^{-i},
\]
where $A$ and $B$ are determined by the conditions%
\begin{align*}
A\gamma^{0}+B\gamma^{0}  &  =z_{0}=-\frac{\alpha}{1-\alpha}x_{r+1}\\
A\gamma+B\gamma^{-1}  &  =z_{1}=x_{r+1}.
\end{align*}
Clearly, $\left(  A,B\right)  $ is the solution to system (\ref{sys}).
Returning back to $x_{i}$, we obtain (\ref{eq1}).
\end{proof}

Now, using Lemma \ref{le1}, we give lower bounds which show that Proposition
\ref{pro1} is quite sharp:

\begin{lemma}
\label{le2}Given the hypotheses of Proposition \ref{pro1}, for every
$i=1,\ldots,r$, we have
\begin{equation}
\frac{x_{i}}{x_{1}}>\left(  1-\gamma^{-2}\right)  \gamma^{-i+1}, \label{lbg}%
\end{equation}
and
\begin{equation}
\frac{x_{r+1}}{x_{1}}>\frac{\left(  \gamma^{2}-1\right)  \left(
1-\alpha\right)  }{\gamma\left(  \left(  1-\alpha\right)  \gamma
+\alpha\right)  }\gamma^{-r}. \label{lb}%
\end{equation}

\end{lemma}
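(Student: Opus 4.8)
The plan is to make Lemma~\ref{le1} fully explicit and then read off both bounds from the single elementary estimate $x_{1}<A\gamma^{r+1}$. If $\rho=2$ then $\gamma=1$, both right-hand sides of (\ref{lbg}) and (\ref{lb}) vanish, while the left-hand sides are positive by Corollary~\ref{cor0}, so there is nothing to prove; hence I would assume $\rho>2$, i.e.\ $\gamma>1$. Solving the $2\times2$ system (\ref{sys}) by Cramer's rule then gives
\[
A=\frac{\left((1-\alpha)\gamma+\alpha\right)x_{r+1}}{(1-\alpha)(\gamma^{2}-1)},\qquad
B=-\frac{\gamma\left((1-\alpha)+\alpha\gamma\right)x_{r+1}}{(1-\alpha)(\gamma^{2}-1)},
\]
so that $A>0>B$ because $\alpha\in[0,1)$, $\gamma>1$, and $x_{r+1}>0$. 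Putting $i=1$ in (\ref{eq1}) and using $B<0$ yields the key strict inequality $x_{1}=A\gamma^{r+1}+B\gamma^{-r-1}<A\gamma^{r+1}$.

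From here (\ref{lb}) is immediate: since $x_{r+1}/A=(1-\alpha)(\gamma^{2}-1)/\left((1-\alpha)\gamma+\alpha\right)$, we get
\[
\frac{x_{r+1}}{x_{1}}>\frac{x_{r+1}}{A\gamma^{r+1}}
=\frac{(\gamma^{2}-1)(1-\alpha)}{\gamma\left((1-\alpha)\gamma+\alpha\right)}\gamma^{-r}.
\]

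For (\ref{lbg}), I would fix $i$ with $1\le i\le r$ and combine (\ref{eq1}) with $x_{1}<A\gamma^{r+1}$ to obtain
\[
\frac{x_{i}}{x_{1}}>\frac{A\gamma^{r+2-i}+B\gamma^{i-r-2}}{A\gamma^{r+1}}
=\gamma^{1-i}-\frac{|B|}{A}\gamma^{i-2r-3}.
\]
It therefore suffices to check $|B|/A\le\gamma^{2r+2-2i}$; since $\gamma>1$ and $2r+2-2i\ge2$, this reduces to $|B|/A<\gamma^{2}$. From the formulas above $|B|/A=\gamma\left((1-\alpha)+\alpha\gamma\right)/\left((1-\alpha)\gamma+\alpha\right)$, and after clearing denominators and cancelling $\alpha\gamma^{2}$ the inequality $|B|/A<\gamma^{2}$ becomes $(1-\alpha)\gamma<(1-\alpha)\gamma^{3}$, which holds precisely because $\gamma>1$. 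Hence $|B|/A<\gamma^{2}\le\gamma^{2r+2-2i}$, and the last display gives $x_{i}/x_{1}>\gamma^{1-i}-\gamma^{-1-i}=(1-\gamma^{-2})\gamma^{-i+1}$.

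The only genuine computation is the first step --- solving (\ref{sys}) while keeping the $(1-\alpha)$-weights straight --- together with the short identity ``$|B|/A<\gamma^{2}\iff\gamma>1$''; everything after that is the one estimate $x_{1}<A\gamma^{r+1}$ used twice. The point that needs care is preserving strictness throughout: $B<0$ makes $x_{1}<A\gamma^{r+1}$ strict, and $\gamma>1$ makes $|B|/A<\gamma^{2}$ strict, which is exactly what yields the strict inequalities asserted in the lemma.
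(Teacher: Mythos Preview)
Your proof is correct and follows essentially the same route as the paper: solve system~(\ref{sys}) explicitly, observe $A>0>B$, use $x_{1}<A\gamma^{r+1}$ to derive (\ref{lb}), and combine that same inequality with $|B|/A<\gamma^{2}$ to obtain (\ref{lbg}). Your algebra for $A$, $B$, and the bound $|B|/A<\gamma^{2}$ is equivalent to the paper's (just rewritten), and you add the small improvement of disposing of the degenerate case $\rho=2$ (i.e.\ $\gamma=1$) up front, which the paper's formulas for $A$ and $B$ silently require since they divide by $\gamma^{2}-1$.
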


\begin{proof}
Let $\left(  A,B\right)  $ be the solution to system (\ref{sys}). Lemma
\ref{le1} implies that
\[
\frac{x_{i}}{x_{1}}=\frac{A\gamma^{r+2-i}+B\gamma^{i-r-2}}{A\gamma
^{r+1}+B\gamma^{-r-1}}%
\]
for $i=1,\ldots,r+1$. Solving system (\ref{sys}), we get
\begin{align}
A  &  =\frac{\gamma-\alpha\left(  \gamma-1\right)  }{\left(  \gamma
^{2}-1\right)  \left(  1-\alpha\right)  }x_{r+1},\label{eqa}\\
B  &  =-\frac{\gamma\left(  \alpha\left(  \gamma-1\right)  +1\right)
}{\left(  \gamma^{2}-1\right)  \left(  1-\alpha\right)  }x_{r+1}.\nonumber
\end{align}
Hence $A>0,$ $B<0,$ and
\[
\frac{B}{A}=-\frac{\gamma\left(  \alpha\left(  \gamma-1\right)  +1\right)
}{\gamma-\alpha\left(  \gamma-1\right)  }>-\frac{\gamma\left(  1\left(
\gamma-1\right)  +1\right)  }{\gamma-1\left(  \gamma-1\right)  }=-\gamma^{2}.
\]
Now, if $1\leq i\leq r$, we find that
\begin{align*}
\frac{x_{i}}{x_{1}}  &  =\frac{A\gamma^{r+2-i}+B\gamma^{i-r-2}}{A\gamma
^{r+1}+B\gamma^{-r-1}}>\frac{A\gamma^{r+2-i}+B\gamma^{i-r-2}}{A\gamma^{r+1}%
}=\gamma^{-i+1}+\frac{B}{A}\gamma^{i-2r-3}\\
&  >\gamma^{-i+1}-\gamma^{i-2r-1}=\left(  1-\gamma^{2i-2r-2}\right)
\gamma^{-i+1}\geq\left(  1-\gamma^{-2}\right)  \gamma^{-i+1},
\end{align*}
proving (\ref{lbg}).

To prove (\ref{lb}), note that (\ref{eq1}) and (\ref{eqa}) imply that
\[
\frac{x_{r+1}}{x_{1}}=\frac{x_{r+1}}{A\gamma^{r+1}+B\gamma^{-r-1}}%
>\frac{x_{r+1}}{A\gamma^{r+1}}=\frac{\left(  \gamma^{2}-1\right)  \left(
1-\alpha\right)  }{\gamma\left(  \left(  1-\alpha\right)  \gamma
+\alpha\right)  }\gamma^{-r}.
\]
This completes the proof of Lemma \ref{le2}.
\end{proof}

\subsection{Perron vectors of graphs with pendent paths}

In this subsection, we prove a structural inequality about the Perron vectors
of graphs with pendent paths.

\begin{lemma}
\label{le3}Let $\alpha\in\left[  0,1\right)  $ and $H$ be a connected graph
with $\rho_{\alpha}\left(  H\right)  \geq5/2$. Let $u$ and $v$ be vertices of
$H$. Attach a path $P:=\left\{  u_{1}=u,\ldots,u_{p}\right\}  $ to $u$ and a
path $Q:=\left\{  v_{1}=v,\ldots,v_{q}\right\}  $ to $v$, and write $G$ for
the resulting graph. Let $\mathbf{x}$ be a positive unit eigenvector to
$\rho:=\rho_{\alpha}\left(  G\right)  $; write $x_{1},\ldots,x_{p}$ for the
$\mathbf{x}$-entries of $u_{1},\ldots,u_{p}$ and $y_{1},\ldots,y_{q}$ for the
$\mathbf{x}$-entries of $v_{1},\ldots,v_{q}$. If $p\geq q+2$, then
\[
\frac{y_{q}}{x_{p-1}}\geq\frac{3y_{1}}{2x_{1}}.
\]

\end{lemma}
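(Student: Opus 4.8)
The idea is to compare the two pendent paths via the exponential‑decay estimates already established. The path $P$ of length $p-1$ is pendent in $G$ (with root $u_1=u$), and the path $Q$ of length $q-1$ is pendent in $G$ (with root $v_1=v$); since $\rho\geq 5/2\geq 2$, Proposition \ref{pro1}, Corollary \ref{cor0} and Lemma \ref{le2} apply. First I would record that $\gamma$, defined by \eqref{gam} with this $\rho$, satisfies $\gamma>1$ and in fact is bounded below by the root of $X^{2}-X+1=0$ coming from $\rho=5/2$, $\alpha=0$; more usefully, $\frac{\rho-2\alpha}{1-\alpha}\geq \frac{5/2-2\alpha}{1-\alpha}\geq 5/2$, so $\gamma$ is at least the larger root of $X^2-\tfrac52 X+1=0$, i.e. $\gamma\geq 2$. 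This uniform lower bound $\gamma\geq 2$ is what will make the constant $3/2$ come out.

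**Key steps.** (1) Apply the upper bound of Proposition \ref{pro1} repeatedly along $P$: since $x_i>\gamma x_{i+1}$ for $i=1,\dots,p-1$, we get $x_{p-1} < \gamma^{-(p-2)} x_1$. (2) Apply the lower bound \eqref{lb} of Lemma \ref{le2} along $Q$: the path $Q$ has $q$ vertices $v_1,\dots,v_q$, i.e. $r=q-1$ in the notation there, so
\[
\frac{y_q}{y_1} > \frac{(\gamma^2-1)(1-\alpha)}{\gamma((1-\alpha)\gamma+\alpha)}\,\gamma^{-(q-1)}.
\]
(3) Divide:
\[
\frac{y_q}{x_{p-1}} = \frac{y_q}{y_1}\cdot\frac{y_1}{x_1}\cdot\frac{x_1}{x_{p-1}}
> \frac{(\gamma^2-1)(1-\alpha)}{\gamma((1-\alpha)\gamma+\alpha)}\,\gamma^{-(q-1)}\cdot \frac{y_1}{x_1}\cdot \gamma^{\,p-2}.
\]
Since $p\geq q+2$, the exponent $p-2-(q-1)=p-q-1\geq 1$, so $\gamma^{p-q-1}\geq\gamma\geq 2$. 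Hence
\[
\frac{y_q}{x_{p-1}} > \frac{2(\gamma^2-1)(1-\alpha)}{\gamma((1-\alpha)\gamma+\alpha)}\cdot\frac{y_1}{x_1}.
\]
(4) It remains to show the scalar prefactor is at least $3/2$, i.e.
\[
\frac{2(\gamma^2-1)(1-\alpha)}{\gamma\bigl((1-\alpha)\gamma+\alpha\bigr)} \geq \frac{3}{2}
\]
for all $\alpha\in[0,1)$ and all admissible $\gamma$ (in particular $\gamma\geq 2$). Rearranging, this is $4(\gamma^2-1)(1-\alpha)\geq 3\gamma((1-\alpha)\gamma+\alpha)$, i.e. $(1-\alpha)(4\gamma^2-4-3\gamma^2) \geq 3\alpha\gamma$, i.e. $(1-\alpha)(\gamma^2-4)\geq 3\alpha\gamma$. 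When $\alpha=0$ this reads $\gamma^2\geq 4$, which is exactly $\gamma\geq 2$. For $\alpha>0$ one has to be a little more careful, because $\gamma^2-4$ can be small; here I would use a sharper lower bound on $\gamma$ in terms of $\alpha$: from $\frac{\rho-2\alpha}{1-\alpha}\geq \frac{5/2-2\alpha}{1-\alpha}$ and the defining quadratic, $\gamma$ is the larger root of $X^2 - \frac{5/2-2\alpha}{1-\alpha}X + 1=0$, so $\gamma + \gamma^{-1} \geq \frac{5/2-2\alpha}{1-\alpha}$, giving $(1-\alpha)(\gamma-1)^2 = (1-\alpha)(\gamma^2+1) - 2(1-\alpha)\gamma \geq \tfrac12(1+\alpha)\gamma$ after substituting; tracking this should yield $(1-\alpha)(\gamma^2-4)-3\alpha\gamma\geq 0$ directly.

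**The main obstacle.** The only delicate point is step (4): verifying the elementary inequality $(1-\alpha)(\gamma^2-4)\geq 3\alpha\gamma$ uniformly, which forces one to use not just $\gamma\geq 2$ but the precise dependence of the lower bound on $\gamma$ upon $\alpha$ coming from $\rho\geq 5/2$. I expect this to reduce, after clearing denominators and substituting $\gamma+\gamma^{-1}\geq (5/2-2\alpha)/(1-\alpha)$, to a quadratic inequality in $\gamma$ with nonnegative coefficients on the relevant range; the threshold $5/2$ in the hypothesis is presumably chosen exactly so that this works (and the constant $3/2$ in the conclusion is presumably not tight but convenient). Everything else — the two decay estimates and the exponent bookkeeping using $p\geq q+2$ — is a direct application of Proposition \ref{pro1} and Lemma \ref{le2}.
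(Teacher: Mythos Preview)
Your steps (1)--(3) are exactly what the paper does, but the passage from (3) to (4) throws away too much: you bound $\gamma^{p-q-1}\geq\gamma\geq 2$ and keep only the $2$, arriving at the target
\[
(1-\alpha)(\gamma^{2}-4)\geq 3\alpha\gamma.
\]
This inequality is simply \emph{false} on the admissible range. For instance, take $\alpha=1/2$ and $\rho=5/2$; then $\gamma+\gamma^{-1}=3$, so $\gamma=(3+\sqrt5)/2\approx 2.618$, and the left side is about $1.43$ while the right side is about $3.93$. No refinement of the lower bound on $\gamma$ can rescue this, because the counterexample uses the exact $\gamma$ for those parameters; the hypothesis $\rho\geq 5/2$ is tight there. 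Your sketch of step~(4) also contains an arithmetic slip: from $(1-\alpha)(\gamma^{2}+1)\geq(5/2-2\alpha)\gamma$ one gets $(1-\alpha)(\gamma-1)^{2}\geq \tfrac12\gamma$, not $\tfrac12(1+\alpha)\gamma$.

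The remedy is small but essential: do not replace $\gamma^{p-q-1}$ by $2$; instead use $\gamma^{p-q-1}\geq\gamma$ and cancel that $\gamma$ against the explicit $\gamma$ in the denominator of the prefactor. This yields the sharper target
\[
\frac{(\gamma^{2}-1)(1-\alpha)}{(1-\alpha)\gamma+\alpha}\ \geq\ \frac32,
\]
which is what the paper proves. Even this does not follow from $\gamma\geq 2$ alone (at $\alpha=0$, $\gamma=2$ gives equality, but for $\alpha>0$ one needs more). The paper argues by contradiction: rewrite $(\gamma^{2}-1)(1-\alpha)=(\rho-2\alpha)\gamma-2(1-\alpha)$ from the defining quadratic, use $\rho\geq 5/2$ to obtain $\gamma<\dfrac{4-\alpha}{2-\alpha}$, and then contradict the auxiliary bound $\gamma\geq\dfrac{2\rho-1-3\alpha}{2(1-\alpha)}$ (Proposition~\ref{pro2}). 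So your overall plan is right, but step~(4) needs both the cancellation of one factor of $\gamma$ and a lower bound on $\gamma$ that genuinely depends on $\alpha$, not just $\gamma\geq 2$.
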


To prove Lemma \ref{le3}, we need a proposition:

\begin{proposition}
\label{pro2}Let $\alpha\in\left[  0,1\right)  $ and $\rho\geq5/2$. If $\gamma$
is defined as in (\ref{gam}), then%
\[
\gamma\geq\frac{2\rho-1-3\alpha}{2-2\alpha}.
\]

\end{proposition}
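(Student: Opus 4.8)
\textbf{Proof strategy for Proposition \ref{pro2}.}

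The plan is to treat this as a pure one-variable inequality about $\gamma$ as a function of $\rho$ (and $\alpha$), using the defining quadratic equation (\ref{ceq}) rather than the explicit radical formula (\ref{gam}), which will keep the algebra clean. Write $t:=\frac{\rho-2\alpha}{1-\alpha}$, so that $\gamma$ is the larger root of $X^{2}-tX+1=0$, i.e.\ $\gamma+\gamma^{-1}=t$ and $\gamma\geq 1$. The target bound, after substituting $\rho=(1-\alpha)t+2\alpha$, becomes $\gamma\geq\frac{2((1-\alpha)t+2\alpha)-1-3\alpha}{2-2\alpha}=t-\frac{1-\alpha}{2-2\alpha}=t-\frac12$. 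So the whole claim reduces to showing
\[
\gamma\geq t-\tfrac12
\]
whenever $\rho\geq 5/2$; note the parameter $\alpha$ has completely disappeared, which is the main simplification.

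Next I would translate $\rho\geq 5/2$ into a lower bound on $t$. Since $t=\frac{\rho-2\alpha}{1-\alpha}$ is increasing in $\rho$ for fixed $\alpha\in[0,1)$, at $\rho=5/2$ we get $t=\frac{5/2-2\alpha}{1-\alpha}=\frac{5-4\alpha}{2-2\alpha}$, and one checks this is at least $5/2$ for all $\alpha\in[0,1)$ (indeed $\frac{5-4\alpha}{2-2\alpha}-\frac52=\frac{(5-4\alpha)-5(1-\alpha)}{2-2\alpha}=\frac{\alpha}{2-2\alpha}\geq0$). Hence $\rho\geq5/2$ implies $t\geq 5/2$. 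So it suffices to prove: if $t\geq 5/2$ then the larger root $\gamma$ of $X^2-tX+1$ satisfies $\gamma\geq t-\tfrac12$.

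For this last step, since the quadratic $f(X)=X^2-tX+1$ opens upward and $\gamma$ is its larger root, it is enough to verify $f(t-\tfrac12)\leq 0$, i.e.\ $\gamma\geq t-\tfrac12$ will follow provided $t-\tfrac12$ lies between the two roots, which since $t-\tfrac12\geq 2 \geq \gamma^{-1}$ reduces exactly to $f(t-\tfrac12)\leq 0$. Computing, $f(t-\tfrac12)=(t-\tfrac12)^2-t(t-\tfrac12)+1=t^2-t+\tfrac14-t^2+\tfrac{t}{2}+1=-\tfrac{t}{2}+\tfrac54$, which is $\leq 0$ precisely when $t\geq 5/2$. That is exactly the hypothesis, so the inequality holds, with equality only at $t=5/2$.

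I do not anticipate a real obstacle here; the only care needed is the bookkeeping to confirm that $\rho\geq 5/2$ genuinely forces $t\geq 5/2$ uniformly in $\alpha$ (handled above) and that $t-\tfrac12$ really is $\geq\gamma^{-1}$ so that sign of $f$ at that point controls the comparison with the \emph{larger} root (immediate from $t\geq 5/2\Rightarrow t-\tfrac12\geq2\geq1\geq\gamma^{-1}$). Everything else is the short quadratic computation above.
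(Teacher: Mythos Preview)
Your argument is correct. Both your proof and the paper's reduce to verifying that $\sqrt{(\rho-2\alpha)^{2}-4(1-\alpha)^{2}}\geq \rho-1-\alpha$; the paper rationalizes this difference directly in the variables $(\rho,\alpha)$ and checks the numerator $2\rho(1-\alpha)-\alpha^{2}+6\alpha-5$ is nonnegative when $\rho\geq 5/2$, while you first substitute $t=(\rho-2\alpha)/(1-\alpha)$, observe that $\alpha$ disappears entirely, and then check the equivalent one-variable inequality $f(t-\tfrac12)\leq 0$ for $t\geq 5/2$. The substitution is a tidy simplification but the underlying computation is the same.
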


\begin{proof}
In view of (\ref{gam}), it is enough to prove that
\[
\sqrt{\left(  \rho-2\alpha\right)  ^{2}-4\left(  1-\alpha\right)  ^{2}%
}-\left(  \rho-1-\alpha\right)  \geq0.
\]
Indeed, we see that%
\begin{align*}
\sqrt{\left(  \rho-2\alpha\right)  ^{2}-4\left(  1-\alpha\right)  ^{2}%
}-\left(  \rho-1-\alpha\right)   &  =\frac{2\rho\left(  1-\alpha\right)
-\alpha^{2}+6\alpha-5}{\sqrt{\left(  \rho-2\alpha\right)  ^{2}-4\left(
1-\alpha\right)  ^{2}}+\left(  \rho-1-\alpha\right)  }\\
&  \geq\frac{5\left(  1-\alpha\right)  -\alpha+6\alpha-5}{\sqrt{\left(
\rho-2\alpha\right)  ^{2}-4\left(  1-\alpha\right)  ^{2}}+\left(
\rho-1-\alpha\right)  }\\
&  =0,
\end{align*}
completing the proof of Proposition \ref{pro2}.
\end{proof}

\bigskip

\begin{proof}
[\textbf{Proof of Lemma \ref{le3}}]To begin with, note that Proposition
\ref{pro1} implies that
\begin{equation}
x_{p-1}<\gamma^{-p+2}x_{1}, \label{bx}%
\end{equation}
and inequality (\ref{lb}) implies that
\begin{equation}
y_{q}>\frac{\left(  \gamma^{2}-1\right)  \left(  1-\alpha\right)  }%
{\gamma\left(  \left(  1-\alpha\right)  \gamma+\alpha\right)  }\gamma
^{-q+1}y_{1}. \label{by}%
\end{equation}
Dividing (\ref{by}) by (\ref{bx}), we see that%
\begin{equation}
\frac{y_{q}}{x_{p-1}}>\frac{\left(  \gamma^{2}-1\right)  \left(
1-\alpha\right)  }{\gamma\left(  \left(  1-\alpha\right)  \gamma
+\alpha\right)  }\gamma^{p-q-1}\frac{y_{1}}{x_{1}}\geq\frac{\left(  \gamma
^{2}-1\right)  \left(  1-\alpha\right)  }{\left(  1-\alpha\right)
\gamma+\alpha}\frac{y_{1}}{x_{1}}. \label{in1}%
\end{equation}
Thus, to finish the proof it is enough to show that
\[
\frac{\left(  \gamma^{2}-1\right)  \left(  1-\alpha\right)  }{\left(
1-\alpha\right)  \gamma+\alpha}\geq\frac{3}{2}.
\]
Assume for a contradiction that the above inequality fails, and recall that
$\gamma$ is a root of the equation (\ref{ceq}), implying in particular that
\[
\left(  \gamma^{2}-1\right)  \left(  1-\alpha\right)  =\left(  \rho
-2\alpha\right)  \gamma-2\left(  1-\alpha\right)  .
\]
Hence, we get
\[
\frac{3}{2}>\frac{\left(  \rho-2\alpha\right)  \gamma-2\left(  1-\alpha
\right)  }{\left(  1-\alpha\right)  \gamma+\alpha}\geq\frac{\left(
5/2-2\alpha\right)  \gamma-2+2\alpha}{\left(  1-\alpha\right)  \gamma+\alpha
},
\]
yielding in turn%
\[
\left(  3-3\alpha\right)  \gamma+3\alpha>\left(  5-4\alpha\right)
\gamma-4+4\alpha,
\]
and further%
\[
\gamma<\frac{4-\alpha}{2-\alpha}.
\]
To get the desired contradiction, we invoke Proposition \ref{pro2}, which
gives%
\[
\frac{4-\alpha}{2-\alpha}>\frac{4-3\alpha}{2-2\alpha}.
\]
and, after some algebra, this inequality reduces to
\[
3\alpha^{2}<2\alpha^{2},
\]
an apparent contradiction that completes the proof of Lemma \ref{le3}.
\end{proof}

\section{\label{smth}Proof of Theorem \ref{mth}}

Our proof of Theorem \ref{mth} is based on two independent results: first,
proving that if $G$ has a maximal $\alpha$-index among all graphs of order $n$
and diameter at least $k$, then $G$ is isomorphic to $B_{n-k+2,p,k-p}$ for
some $p$, satisfying $1\leq p\leq k$; and second, proving that among all
graphs $B_{n-k+2,p,k-p}$, $\left(  1\leq p\leq\left\lfloor k/2\right\rfloor
\right)  $, the maximal $\alpha$-index is attained when $p=\left\lfloor
k/2\right\rfloor $. We start with establishing the second result, which is
based on the following crucial lemma:

\begin{lemma}
\label{th1}Let $\alpha\in\left[  0,1\right)  $ and $k\geq4$. If $p\geq q+2$
and $q\geq1$, then%
\[
\rho_{\alpha}\left(  B_{k,p,q}\right)  <\rho_{\alpha}\left(  B_{k,p-1,q+1}%
\right)  .
\]

\end{lemma}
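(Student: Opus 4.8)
The plan is to compare $B_{k,p,q}$ and $B_{k,p-1,q+1}$ by moving one vertex from the end of the longer pendent path to the end of the shorter one, and to show this strictly increases the $\alpha$-index. Concretely, label the two pendent paths of $G:=B_{k,p,q}$ as $P=(u_1,\dots,u_p)$ rooted at $u_1$ and $Q=(v_1,\dots,v_q)$ rooted at $v_1$, where $u_1,v_1$ are the two ``ends'' of the $K_k-e$ that carries the paths (the vertices of degree $k-1$ inside the clique). Let $\mathbf{x}$ be a positive unit Perron eigenvector of $A_\alpha(G)$; write $x_1,\dots,x_p$ for its entries on $P$ and $y_1,\dots,y_q$ for its entries on $Q$. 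The graph $B_{k,p-1,q+1}$ is obtained from $G$ by deleting the pendent vertex $u_p$ (together with its edge to $u_{p-1}$) and attaching a new pendent vertex to $v_q$; I will exhibit a test vector on $B_{k,p-1,q+1}$ built from $\mathbf{x}$ whose Rayleigh quotient exceeds $\rho_\alpha(G)$, and then argue the inequality is strict.

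First I would verify the hypothesis $\rho_\alpha(G)\ge 5/2$ needed to apply Lemma \ref{le3}: since $k\ge 4$, the graph $B_{k,p,q}$ contains $K_4-e$ with two further pendent edges, so $\rho_\alpha(G)\ge\rho_\alpha(K_4-e)$, and a direct check gives $\rho_\alpha(K_4-e)\ge 5/2$ for all $\alpha\in[0,1)$ (indeed already $\rho_\alpha(K_4)\ge 3$, and $K_4-e$ is only slightly smaller — this needs a short separate estimate, e.g. using the quadratic form on the all-ones vector, which gives $\rho_\alpha(K_4-e)\ge (2\cdot 5 + \alpha(2\cdot 2 + 4\cdot 3 -2\cdot 5))/4 = 5/2 + \alpha/2 \ge 5/2$). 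With $\rho:=\rho_\alpha(G)\ge 5/2$ and $p\ge q+2$, Lemma \ref{le3} yields the key structural fact
\[
\frac{y_q}{x_{p-1}}\ \ge\ \frac{3y_1}{2x_1}.
\]
Now form the vector $\mathbf{z}$ on $V(B_{k,p-1,q+1})$ by keeping $\mathbf{x}$ on the common part $H\cup P'$ (where $P'=(u_1,\dots,u_{p-1})$) and on $Q$, and assigning to the new pendent vertex attached to $v_q$ the value $t\cdot y_q$ for a scalar $t>0$ to be chosen — while simultaneously the entry that used to sit on $u_p$ is gone. Computing $\langle A_\alpha(B_{k,p-1,q+1})\mathbf{z},\mathbf{z}\rangle - \langle A_\alpha(G)\mathbf{x},\mathbf{x}\rangle$, the only affected eigenequation-terms are those around $u_{p-1}$, $u_p$, $v_q$, and the new vertex; the change is, up to the normalization, of the form
\[
-\bigl(\alpha x_p^2 + 2(1-\alpha)x_{p-1}x_p + \alpha x_{p-1}^2\bigr) \;+\; \bigl(\alpha t^2 y_q^2 + 2(1-\alpha)t\,y_q^2 + \alpha y_q^2\bigr),
\]
after accounting for the degree change at $u_{p-1}$ (from $2$ to $1$) and at $v_q$ (from $1$ to $2$). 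Using Proposition \ref{pro1} to bound $x_p<\gamma^{-1}x_{p-1}$ and $x_{p-1}<\gamma^{-p+2}x_1$, and the Lemma \ref{le3} bound $y_q\ge \tfrac{3}{2}x_{p-1}\cdot(y_1/x_1)\ge \tfrac32 x_{p-1}$ together with $x_{p-1}<x_1$ … more simply $y_q$ is comparable to $x_{p-1}$ but a constant factor $\tfrac32$ larger in the relevant sense, choosing $t=1$ one shows the positive block dominates the negative block because $y_q^2 \ge \tfrac94 x_{p-1}^2 > x_{p-1}^2 > x_{p-1}x_p + x_p^2$. This makes the Rayleigh quotient of $\mathbf{z}$ strictly larger than $\rho_\alpha(G)$ once one also checks $\|\mathbf{z}\|^2$ does not grow enough to offset the gain — here it helps to note $\|\mathbf{z}\|^2 = 1 - x_p^2 + t^2 y_q^2$, and since we want an upper bound on the Rayleigh quotient denominator we instead argue via the inequality $\rho_\alpha(B_{k,p-1,q+1})\ge \langle A_\alpha \mathbf{z},\mathbf{z}\rangle/\langle \mathbf{z},\mathbf{z}\rangle$ and a clean choice such as $t=x_p/y_q$ so that $\|\mathbf z\|=1$ exactly, reducing everything to showing the numerator strictly increases.

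With $t=x_p/y_q$ the vector $\mathbf z$ is again a unit vector, and the numerator difference becomes
\[
\langle A_\alpha(B_{k,p-1,q+1})\mathbf z,\mathbf z\rangle - \rho
= \alpha(x_p^2 - x_p^2) + 2(1-\alpha)\bigl(x_p\cdot x_p - x_{p-1}x_p\bigr) + (\text{degree terms}),
\]
which simplifies once one carefully tracks that $v_q$ gains a neighbor with entry $x_p$ while $u_{p-1}$ loses a neighbor with entry $x_p$; the net cross-term change is $2(1-\alpha)x_p(x_p - x_{p-1})<0$, but the net diagonal change is $\alpha x_p^2(\text{from new vertex's degree }1) + \alpha x_p^2 (\text{from }v_q\text{'s degree going }1\to 2) - \alpha x_p^2(u_p\text{ gone}) - \alpha x_p^2 (u_{p-1}\text{'s degree }2\to1)$, which cancels — so the honest computation must instead place the larger weight $y_q$ on the new vertex, not $x_p$. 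Thus the correct move is: keep $\mathbf x$ fixed on everything except delete the coordinate at $u_p$ and add a coordinate equal to $x_p$ at the new leaf of $Q$ is \emph{not} optimal; rather I put weight $c\,x_{p-1}$ on the new leaf with $c$ chosen near the ratio forced by the $Q$-side eigenequation, i.e. $c = y_q/y_{q-1}\cdot(\text{something})$, and then the gain on the $Q$-side (where entries are genuinely larger, by Lemma \ref{le3}) beats the loss on the $P$-side. The main obstacle, and the part requiring real care, is exactly this bookkeeping: setting up the test vector so that the Rayleigh-quotient increase is manifestly positive, which is where Lemma \ref{le3}'s factor $3/2$ does the work — it guarantees the $Q$-end entries are large enough (relative to the $P$-end entries being removed) that the quadratic-form gain strictly dominates. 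Once the strict increase of the Rayleigh quotient is established, $\rho_\alpha(B_{k,p-1,q+1}) > \rho_\alpha(B_{k,p,q})$ follows immediately from the variational characterization, completing the proof.
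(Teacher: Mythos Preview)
Your proposal has the right skeleton but contains a genuine gap and an unnecessary complication.

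\medskip

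\textbf{The gap.} You invoke Lemma \ref{le3} to obtain $y_q/x_{p-1}\ge \tfrac{3}{2}\,y_1/x_1$, and then write ``$y_q\ge \tfrac{3}{2}x_{p-1}\cdot(y_1/x_1)\ge \tfrac{3}{2}x_{p-1}$''. The second inequality silently assumes $y_1\ge x_1$, which you never justify and which is in fact \emph{not} what one expects: since the longer path hangs from $u_1$, the Perron weight at $u_1$ is typically at least as large as at $v_1$. The paper devotes the second half of its proof precisely to showing $y_1/x_1\ge 2/3$ (not $\ge 1$), via the eigenequations at $u_1$ and $v_1$ together with the lower bound $\rho\ge\rho_\alpha(K_k-e)\ge k-3+2\alpha$ from Proposition \ref{pro3}. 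That estimate, combined with $y_q/x_{p-1}>\tfrac{3}{2}\,y_1/x_1$, is exactly what forces $y_q\ge x_{p-1}$. Without it, Lemma \ref{le3} alone does not close the argument, and none of your test-vector manipulations recover the missing factor.

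\medskip

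\textbf{The complication.} You model $B_{k,p,q}\to B_{k,p-1,q+1}$ as ``delete vertex $u_p$, attach a fresh vertex to $v_q$'', which forces you to pick a new coordinate value (your parameter $t$) and to track a changing norm. The paper instead views the move as an \emph{edge rotation on the same vertex set}: delete $\{u_{p-1},u_p\}$ and add $\{u_p,v_q\}$. Keeping the \emph{same} unit vector $\mathbf{x}$ on both graphs, the quadratic-form difference collapses to
\[
\langle A_\alpha(B_{k,p-1,q+1})\mathbf{x},\mathbf{x}\rangle-\langle A_\alpha(B_{k,p,q})\mathbf{x},\mathbf{x}\rangle
=\bigl(\alpha x_{p-1}+\alpha y_q+2(1-\alpha)x_p\bigr)(y_q-x_{p-1}),
\]
so everything reduces cleanly to the single inequality $y_q\ge x_{p-1}$, with strictness following from Lemma \ref{rol}. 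Your repeated attempts with $t=1$, then $t=x_p/y_q$, then $t=c\,x_{p-1}$ are all trying to approximate this, but none of them lands; the edge-rotation viewpoint removes the issue entirely.

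\medskip

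(A minor point: your Rayleigh-quotient check for $\rho_\alpha(K_4-e)\ge 5/2$ is correct in conclusion but the displayed arithmetic is off --- with the all-ones vector both $\langle A\mathbf{1},\mathbf{1}\rangle$ and $\langle D\mathbf{1},\mathbf{1}\rangle$ equal $10$, so the quotient is exactly $5/2$, not $5/2+\alpha/2$.)
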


To prove the lemma, we need a lower bound on the $\alpha$-index of the graph
$K_{k}-e$:

\begin{proposition}
\label{pro3}If $k\geq4$, then%
\begin{equation}
\rho_{\alpha}\left(  K_{k}-e\right)  \geq k-3+2\alpha. \label{lob}%
\end{equation}

\end{proposition}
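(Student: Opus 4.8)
The plan is to combine the symmetry of $K_k-e$ with the Perron--Frobenius theorem, reducing the whole statement to a single eigenequation. Let $u$ and $v$ denote the two nonadjacent vertices of $K_k-e$; they are precisely the vertices of degree $k-2$, while the remaining $k-2$ vertices, whose set I call $W$, all have degree $k-1$. Since $k\geq4$, the graph $K_k-e$ is connected, and since $\alpha\in[0,1)$ the matrix $A_\alpha(K_k-e)$ is nonnegative and irreducible; hence $\rho:=\rho_\alpha(K_k-e)$ is a simple eigenvalue and admits a positive eigenvector $\mathbf{x}$. The automorphism group of $K_k-e$ has precisely two orbits on the vertices, namely $\{u,v\}$ and $W$, so, $\mathbf{x}$ being unique up to scaling, it is constant on each orbit: write $a>0$ for its value on $W$ and $b>0$ for its common value on $u$ and $v$.

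Next I would write down the eigenequation of $A_\alpha(K_k-e)$ at an arbitrary vertex $w\in W$. Its neighbourhood consists of $\{u,v\}$ together with the other $k-3$ vertices of $W$, so
\[
\rho\,a=\alpha(k-1)\,a+(1-\alpha)\bigl[(k-3)\,a+2b\bigr].
\]
The one computation that matters is the identity $\alpha(k-1)+(1-\alpha)(k-3)=k-3+2\alpha$, after which the equation rearranges to
\[
\bigl(\rho-(k-3+2\alpha)\bigr)\,a=2(1-\alpha)\,b .
\]
Because $a,b>0$ and $1-\alpha>0$, the right-hand side is strictly positive, so $\rho>k-3+2\alpha$, which is slightly stronger than the claimed (\ref{lob}).

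Should one wish to bypass the orbit/uniqueness argument, the same bound drops out of the Rayleigh quotient: taking the test vector $\mathbf{z}$ that equals $1$ on $W$ and equals a parameter $t>0$ on $\{u,v\}$, one finds $\langle A_\alpha(K_k-e)\mathbf{z},\mathbf{z}\rangle=(k-2)\bigl[(k-3+2\alpha)+2\alpha t^2+4(1-\alpha)t\bigr]$ and $\langle\mathbf{z},\mathbf{z}\rangle=(k-2)+2t^2$, and a short manipulation shows the quotient is at least $k-3+2\alpha$ whenever $0<t\leq\tfrac{2(1-\alpha)(k-2)}{(k-3)-\alpha(k-4)}$ --- the denominator being positive for every $k\geq4$ and $\alpha\in[0,1)$, so such $t$ exist. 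I do not foresee any genuine obstacle in either route; the only steps deserving a second look are the identity $\alpha(k-1)+(1-\alpha)(k-3)=k-3+2\alpha$ (which is exactly what singles out $k-3+2\alpha$ as the right target) and, in the test-vector version, the positivity of $(k-3)-\alpha(k-4)$ guaranteeing an admissible choice of $t$.
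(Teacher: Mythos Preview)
Your argument is correct, and in fact yields the strict inequality $\rho_\alpha(K_k-e)>k-3+2\alpha$. The route, however, is genuinely different from the paper's. The paper uses \emph{both} orbit eigenequations to identify $\rho_\alpha(K_k-e)$ as the larger root of the quadratic $X^{2}-(k-3+k\alpha)X+(k-2)((k+1)\alpha-2)=0$, writes the root explicitly, and then argues by contradiction with some algebra involving the discriminant. Your proof is more economical: you write down only the eigenequation at a vertex of $W$, observe the identity $\alpha(k-1)+(1-\alpha)(k-3)=k-3+2\alpha$, and read off the inequality from positivity of the Perron entries. This avoids solving the quadratic altogether. The trade-off is that the paper's computation produces the exact value of $\rho_\alpha(K_k-e)$ as a by-product (potentially useful elsewhere), whereas your method gives only the lower bound---but since the lower bound is all Proposition~\ref{pro3} asks for, your approach is the cleaner one here. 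The alternative Rayleigh-quotient calculation you sketch also checks out, including the positivity of $(k-3)-\alpha(k-4)$ for $k\geq4$ and $\alpha\in[0,1)$.
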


\begin{proof}
First, since the entries of an eigenvector to $K_{k}-e$ take only two values,
it is not hard to show that $\rho_{\alpha}\left(  K_{k}-e\right)  $ is the
larger root of the equation
\[
X^{2}-\left(  k-3+k\alpha\right)  X+\left(  k-2\right)  \left(  \left(
k+1\right)  \alpha-2\right)  =0.
\]
Hence, one gets%
\begin{align*}
\rho_{\alpha}\left(  K_{k}-e\right)   &  =\frac{1}{2}\left(  k-3+k\alpha
+\sqrt{\left(  k-3+k\alpha\right)  ^{2}-4\left(  k-2\right)  \left(  \left(
k+1\right)  \alpha-2\right)  }\right) \\
&  =\frac{1}{2}\left(  k-3+k\alpha+\sqrt{k^{2}\left(  1-\alpha\right)
^{2}+2\left(  k-4\right)  \left(  1-\alpha\right)  +1}\right)  .
\end{align*}
Assume for a contradiction that (\ref{lob}) fails, implying that
\[
k-3+k\alpha+\sqrt{k^{2}\left(  1-\alpha\right)  ^{2}+2\left(  k-4\right)
\left(  1-\alpha\right)  +1}<2k-6+4\alpha,
\]
and therefore,%
\[
\sqrt{k^{2}\left(  1-\alpha\right)  ^{2}+2\left(  k-4\right)  \left(
1-\alpha\right)  +1}<\left(  k-4\right)  \left(  1-\alpha\right)  +1.
\]
Squaring both sides, we get%
\[
k^{2}\left(  1-\alpha\right)  ^{2}+2\left(  k-4\right)  \left(  1-\alpha
\right)  +1<\left(  k-4\right)  \left(  1-\alpha\right)  ^{2}+2\left(
k-4\right)  \left(  1-\alpha\right)  +1,
\]
which is an obvious contradiction. Proposition \ref{pro3} is proved.
\end{proof}

\bigskip

\begin{proof}
[\textbf{Proof of Lemma \ref{th1}}]We adopt the setup of Lemma \ref{le3}: Let
$u$ and $v$ be the two nonadjacent vertices of $B_{k,p,q}$; let $P:=\left\{
u_{1}=u,\ldots,u_{p}\right\}  $ be the path attached to $u$ and a $Q:=\left\{
v_{1}=v,\ldots,v_{q}\right\}  $ be the path attached to $v$. Let $\mathbf{x}$
be a positive unit eigenvector to $\rho:=\rho_{\alpha}\left(  B_{k,p,q}%
\right)  $; write $x_{1},\ldots,x_{p}$ for the $\mathbf{x}$-entries of
$u_{1},\ldots,u_{p}$ and $y_{1},\ldots,y_{q}$ for the $\mathbf{x}$-entries of
$v_{1},\ldots,v_{q}$. For convenience, write $H$ for the subgraph of
$B_{k,p,q}$ that is isomorphic to $K_{k}-e$.

Note that by deleting the edge $\left\{  u_{p},u_{p-1}\right\}  $ and adding
the edge $\left\{  u_{p},v_{q}\right\}  $, the graph $B_{k,p,q}$ is
transformed into $B_{k,p-1,q+1}$. Now, calculating the quadratic forms
$\left\langle A_{\alpha}\left(  B_{k,p-1,q+1}\right)  \mathbf{x}%
,\mathbf{x}\right\rangle $ and $\left\langle A_{\alpha}\left(  B_{k,p,q}%
\right)  \mathbf{x},\mathbf{x}\right\rangle $, we see that%
\begin{align*}
\rho_{\alpha}\left(  B_{k,p-1,q+1}\right)  -\rho_{\alpha}\left(
B_{k,p,q}\right)   &  \geq\left\langle A_{\alpha}\left(  B_{k,p-1,q+1}\right)
\mathbf{x},\mathbf{x}\right\rangle -\left\langle A_{\alpha}\left(
B_{k,p,q}\right)  \mathbf{x},\mathbf{x}\right\rangle \\
&  =-\alpha x_{p-1}^{2}+\alpha y_{q}^{2}+2\left(  1-\alpha\right)
x_{p}\left(  -x_{p-1}+y_{q}\right) \\
&  =\left(  \alpha x_{p-1}+\alpha y_{q}+2\left(  1-\alpha\right)
x_{p}\right)  \left(  y_{q}-x_{p-1}\right)  .
\end{align*}
To prove the theorem, it is enough to show that $y_{q}/x_{p-1}\geq1$, thus,
this inequality is our goal to the end of the proof.

First, Lemma \ref{le3} gives
\begin{equation}
\frac{y_{q}}{x_{p-1}}>\frac{3y_{1}}{2x_{1}}, \label{in}%
\end{equation}
thereby our task is reduced to showing that $y_{1}/x_{1}\geq2/3$. Further, the
eigenequations of $A_{\alpha}\left(  G\right)  $ for $u$ and $v$ give
\begin{align*}
\rho x_{1}  &  =\alpha\left(  k-1\right)  x_{1}+\left(  1-\alpha\right)
\sum_{i\in\Gamma_{H}\left(  u\right)  \backslash\left\{  v\right\}  }%
x_{i}+\left(  1-\alpha\right)  x_{2},\\
\rho y_{1}  &  \geq\alpha\left(  k-1\right)  y_{1}+\left(  1-\alpha\right)
\sum_{i\in\Gamma_{H}\left(  v\right)  \backslash\left\{  u\right\}  }x_{i}.
\end{align*}
Since $\sum_{i\in\Gamma_{H}\left(  u\right)  \backslash\left\{  v\right\}
}x_{i}=\sum_{i\in\Gamma_{H}\left(  v\right)  \backslash\left\{  u\right\}
}x_{i}$, writing $S$ for $\sum_{i\in\Gamma_{H}\left(  u\right)  \backslash
\left\{  v\right\}  }x_{i}$, we see that
\begin{align*}
\left(  \rho-\left(  k-1\right)  \alpha\right)  x_{1}  &  =\left(
1-\alpha\right)  S+\left(  1-\alpha\right)  x_{2}\\
&  <\left(  1-\alpha\right)  S+\gamma^{-1}\left(  1-\alpha\right)  x_{1},
\end{align*}
yielding in turn%
\[
\left(  \rho-\left(  k-1\right)  \alpha-\gamma^{-1}\left(  1-\alpha\right)
\right)  x_{1}<\left(  1-\alpha\right)  S.
\]
Likewise, we get
\[
\left(  \rho-\left(  k-1\right)  \alpha\right)  y_{1}\geq\left(
1-\alpha\right)  S.
\]
Dividing the last inequality by the previous one, we find that
\begin{equation}
\frac{y_{1}}{x_{1}}>\frac{\rho-\left(  k-1\right)  \alpha-\gamma^{-1}\left(
1-\alpha\right)  }{\rho-\left(  k-1\right)  \alpha}=1-\frac{1-\alpha}{\left(
\rho-\left(  k-1\right)  \alpha\right)  \gamma}. \label{in2}%
\end{equation}
Hence, to show that $y_{1}/x_{1}\geq2/3$, it is enough to prove the
inequality
\[
\frac{1-\alpha}{\left(  \rho-\left(  k-1\right)  \alpha\right)  \gamma}%
\leq\frac{1}{3}.
\]
Proposition \ref{pro3} gives%
\[
\frac{1-\alpha}{\left(  \rho-\left(  k-1\right)  \alpha\right)  \gamma}%
<\frac{1-\alpha}{\left(  k-3+2\alpha-\left(  k-1\right)  \alpha\right)
\gamma}=\frac{1}{\left(  k-3\right)  \gamma}.
\]
In particular, since $\gamma\geq2$, the theorem is proved for $k\geq5$.
Moreover, if $k=4$, Proposition \ref{pro2} implies that $\gamma\geq3$, as long
as $\alpha\geq2/3$; hence, the theorem is proved also for $k=4$ and
$\alpha\geq2/3$. In the remaining case ($k=4$ and $\alpha<2/3$), using the
bounds $\rho\geq5/2$ and $\gamma\geq2$, we see that
\[
\frac{1-\alpha}{\left(  \rho-\left(  k-1\right)  \alpha\right)  \gamma}%
\leq\frac{1-\alpha}{\left(  5/2-3\alpha\right)  2}=\frac{1-\alpha}{5-6\alpha
}<\frac{1}{3}.
\]
Theorem \ref{th1} is proved.
\end{proof}

\begin{corollary}
\label{cor1}If $p$ and $q+r$ are fixed, then
\[
\rho_{\alpha}(B_{p,q,r})\leq\rho_{\alpha}(B_{p,\left\lfloor \left(
q+r\right)  /2\right\rfloor ,\left\lceil \left(  q+r\right)  /2\right\rceil
})
\]

\end{corollary}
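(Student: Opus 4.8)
The plan is to deduce Corollary \ref{cor1} from Lemma \ref{th1} by a short induction on the imbalance of the two pendent paths. First I would use the obvious symmetry $B_{p,q,r}\cong B_{p,r,q}$ to assume without loss of generality that $q\leq r$; recall also that in a bug both path-parameters are at least $1$. I would then dispose of the small values of $p$: for $p=3$ the graph $K_{3}-e$ is a path whose endpoints are its two nonadjacent vertices, so $B_{3,q,r}\cong P_{q+r+1}$ no matter how $q+r$ is split, and the stated inequality holds (with equality); the cases $p\leq 2$ are degenerate and can be ignored. Thus I may assume $p\geq 4$, which is precisely the hypothesis under which Lemma \ref{th1} is available.

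Next I would induct on $r-q\geq 0$. If $r-q\in\{0,1\}$ then $(q,r)=(\lfloor (q+r)/2\rfloor,\lceil (q+r)/2\rceil)$, so the two graphs in the corollary coincide and there is nothing to prove. If $r-q\geq 2$, then $r\geq q+2$ and $1\leq q\leq r$, so Lemma \ref{th1} applies with the near-complete part $p$ in the role of its ``$k$'' (legal since $p\geq 4$), the longer path $r$ in the role of its ``$p$'', and the shorter path $q$ in the role of its ``$q$''; it gives
\[
\rho_{\alpha}(B_{p,q,r})=\rho_{\alpha}(B_{p,r,q})<\rho_{\alpha}(B_{p,r-1,q+1}).
\]
The bug $B_{p,r-1,q+1}$ has the same near-complete part $p$, the same path-sum $(r-1)+(q+1)=q+r$, and strictly smaller imbalance $(r-1)-(q+1)=r-q-2$, so by the induction hypothesis $\rho_{\alpha}(B_{p,r-1,q+1})\leq\rho_{\alpha}(B_{p,\lfloor (q+r)/2\rfloor,\lceil (q+r)/2\rceil})$; combining the two inequalities completes the induction.

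I do not expect any genuine obstacle: the whole force of the statement is Lemma \ref{th1}, and what remains is bookkeeping. The only points that need a moment's care are that the imbalance $r-q$ drops by exactly $2$ at each step --- so its parity is preserved and the recursion terminates exactly at the balanced pair $(\lfloor (q+r)/2\rfloor,\lceil (q+r)/2\rceil)$ --- and that the hypotheses $p\geq 4$, $r\geq q+2$, $q\geq 1$ of Lemma \ref{th1} genuinely hold at every step, the latter two because we are in the case $r-q\geq 2$ with $1\leq q\leq r$ and the shorter parameter only grows as the process runs. (As a byproduct the argument also yields that equality holds only for the balanced bug, though the corollary asks only for the inequality.)
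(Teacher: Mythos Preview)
Your proposal is correct and is exactly the argument the paper has in mind: the corollary is stated immediately after Lemma~\ref{th1} with no proof, so the intended deduction is precisely the repeated application of Lemma~\ref{th1} (together with the symmetry $B_{p,q,r}\cong B_{p,r,q}$) that you spell out by inducting on the imbalance $r-q$. Your handling of the boundary cases $p\leq 3$ is in fact more careful than the paper, which tacitly assumes $p\geq 4$ when invoking the corollary.
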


\bigskip

\begin{proof}
[\textbf{Proof of Theorem \ref{mth}}]The statement is clear if $k=1$, for
$K_{n}$ is the only graph of order $n$ and diameter $1$. Suppose that $k\geq
2$, and let $G$ be a graph with maximal $\alpha$-index among all graphs of
order $n$ and $\mathrm{diam}\left(  G\right)  \geq k$. This choice implies
that $G$ is edge-maximal, that is, no edge can be added to $G$ without
diminishing its diameter; in particular, $\mathrm{diam}\left(  G\right)  =k$.
In the light of Corollary \ref{cor1}, we only need to show that $G=$
$B_{n-k+2,p,k-p}$ for some $p$, satisfying $1\leq p\leq k-1$.

Set for short $\rho:=\rho_{\alpha}\left(  G\right)  $. Let $u,v$ be vertices
of $G$ at distance exactly $k$, and for every $i=0,\ldots,k$, let $V_{i}$ be
the set of vertices at distance $i$ from $u$. Since $G$ is edge-maximal, for
every $i=0,\ldots,k-1$, the set $V_{i}\cup V_{i+1}$ induces a complete graph.
It is also clear that $\left\vert V_{0}\right\vert =1$; moreover, it is not
hard to see that $\left\vert V_{k}\right\vert =1$. Indeed, assume for a
contradiction that $\left\vert V_{k}\right\vert \geq2$, and add all edges
between $V_{k-2}$ and $V\backslash\left\{  v\right\}  $. These additional
edges do not diminish the distance between $u$ and $v$; hence $G$ is not
edge-maximal, contradicting its choice; therefore $\left\vert V_{k}\right\vert
=1$.

Further, well-known bounds for $\rho_{\alpha}\left(  G\right)  $ show that
\[
\Delta\left(  G\right)  \geq\rho_{\alpha}\left(  G\right)  >\rho_{\alpha
}(B_{n-k+2,\lfloor k/2\rfloor,\lceil k/2\rceil})>\rho_{\alpha}(K_{n-k+2}%
-e)>\delta\left(  K_{n-k+2}-e\right)  =n-k,
\]
and so, $\Delta\left(  G\right)  \geq n-k+1.$ Suppose that $w$ is vertex of
maximum degree in $G$, and let say $w\in V_{i}$. Clearly, $0<i<k$, and in view
of
\[
d\left(  w\right)  =\left\vert V_{i-1}\right\vert +\left\vert V_{i}\right\vert
+\left\vert V_{i+1}\right\vert -1,
\]
we find that
\[
n-k+2\leq\left\vert V_{i-1}\right\vert +\left\vert V_{i}\right\vert
+\left\vert V_{i+1}\right\vert =n-\sum_{j<i-1}\left\vert V_{j}\right\vert
-\sum_{j>i+1}\left\vert V_{j}\right\vert =n-\left(  k+1-3\right)  =n-k+2.
\]
Hence, if $j<i-1$ or $j>i+1$, then $\left\vert V_{j}\right\vert =1$;
furthermore, $\left\vert V_{i-1}\right\vert +\left\vert V_{i}\right\vert
+\left\vert V_{i+1}\right\vert =n-k+2$.

If $\left\vert V_{i-1}\right\vert =\left\vert V_{i+1}\right\vert =1$, then
obviously $G=$ $B_{n-k+2,i,k-i}$, so Theorem \ref{mth} is proved in this case.
We shall show that all other cases lead to contradictions, by constructing a
graph $H$ of order $n$ and $\mathrm{diam}\left(  G\right)  =k$ with
$\rho_{\alpha}\left(  H\right)  >\rho$. Suppose that $\mathbf{x}:=\left(
x_{1},\ldots,x_{n}\right)  $ is a positive unit vector to $\rho_{\alpha
}\left(  G\right)  $.

First, consider the case $\left\vert V_{i-1}\right\vert =1$ and $\left\vert
V_{i+1}\right\vert \geq2$. If $\left\vert V_{i}\right\vert =1$, the proof is
completed, so we suppose that $\left\vert V_{i}\right\vert \geq2$. Let
$V_{i-1}=\left\{  a\right\}  ,$ $V_{i+2}=\left\{  b\right\}  ,$ and suppose by
symmetry that $x_{b}\geq x_{a}$. Choose a vertex $w\in V_{i}$, delete the edge
$\left\{  w,a\right\}  $, add the edge $\left\{  w,b\right\}  $, and write $H$
for the resulting graph. In other words, $H$ is obtained by moving the vertex
$w$ from $V_{i}$ into $V_{i+1}$. By symmetry, $x_{w^{\prime}}=x_{w}$ for any
$w^{\prime}\in V_{i}$; thus the choice of $G$ implies that
\begin{align*}
0  &  \geq\rho_{\alpha}\left(  H\right)  -\rho\geq\left\langle A_{\alpha
}\left(  H\right)  \mathbf{x},\mathbf{x}\right\rangle -\left\langle A_{\alpha
}\left(  G\right)  \mathbf{x},\mathbf{x}\right\rangle \\
&  \geq\left(  1-\alpha\right)  \left(  \left\langle A\left(  H\right)
\mathbf{x},\mathbf{x}\right\rangle -\left\langle A\left(  G\right)
\mathbf{x},\mathbf{x}\right\rangle \right)  +\alpha\left(  \left\langle
D\left(  H\right)  \mathbf{x},\mathbf{x}\right\rangle -\left\langle D\left(
G\right)  \mathbf{x},\mathbf{x}\right\rangle \right)  .
\end{align*}
On the other hand, it is not hard to see that%
\begin{align*}
\left\langle A\left(  H\right)  \mathbf{x},\mathbf{x}\right\rangle
-\left\langle A\left(  G\right)  \mathbf{x},\mathbf{x}\right\rangle  &
=2x_{w}\left(  x_{b}-x_{a}\right)  ,\\
\left\langle D\left(  H\right)  \mathbf{x},\mathbf{x}\right\rangle
-\left\langle D\left(  G\right)  \mathbf{x},\mathbf{x}\right\rangle  &
=x_{b}^{2}-x_{a}^{2}.
\end{align*}
Hence,%
\[
0\geq\left(  x_{b}-x_{a}\right)  \left(  2\left(  1-\alpha\right)
x_{w}+\alpha\left(  x_{b}+x_{a}\right)  \right)  \geq0,
\]
implying that $\rho_{\alpha}\left(  H\right)  =\rho_{\alpha}\left(  G\right)
$ and that $\mathbf{x}$ is an eigenvector to $\rho_{\alpha}\left(  H\right)
$. However, the neighborhood of $a$ in $H$ is a proper subset of the
neighborhood of $a$ in $G$, so the eigenequations for $\rho_{\alpha}\left(
H\right)  $ and $\rho_{\alpha}\left(  G\right)  $ for the vertex $a$ are contradictory.

The same argument disposes also of the case $\left\vert V_{i-1}\right\vert
\geq2$ and $\left\vert V_{i+1}\right\vert =1$; thus, to complete the proof, it
remains to consider the case $\left\vert V_{i-1}\right\vert \geq2$ and
$\left\vert V_{i+1}\right\vert \geq2$.

Let $V_{i-2}=\left\{  a\right\}  ,$ $c\in V_{i-1},$ $d\in V_{i+1},$ and
$V_{i+2}=\left\{  b\right\}  $. Our first step is to show that
\begin{equation}
x_{c}>x_{a}. \label{inx}%
\end{equation}
Note that if $i\geq3$, and $V_{i-3}=\left\{  z\right\}  $, then Proposition
\ref{pro1} gives $x_{z}<x_{a}$. Hence, setting $l:=\left\vert V_{i-1}%
\right\vert $, the eigenequation for the vertex $a$ implies that
\[
\rho x_{a}<\alpha\left(  l+1\right)  x_{a}+\left(  1-\alpha\right)  x_{\alpha
}+\left(  1-\alpha\right)  lx_{c},
\]
yielding in turn%
\[
x_{c}>\frac{\rho-1-\alpha l}{1-\alpha}x_{\alpha}=\left(  \rho-1+\frac
{\rho-1-l}{1-\alpha}\alpha\right)  x_{a}.
\]
Since
\[
\rho-1\geq n-k-1=\left\vert V_{i-1}\right\vert +\left\vert V_{i}\right\vert
+\left\vert V_{i+1}\right\vert -3\geq\left\vert V_{i-1}\right\vert =l,
\]
inequality (\ref{inx}) is proved. By symmetry, we also see that $x_{d}>x_{b}$.
Suppose, again by symmetry, that $x_{b}\geq x_{a}$, which yields $x_{a}\leq
x_{b}<x_{d}$. Choose a vertex $w\in V_{i-1}$, delete the edge $\left\{
w,a\right\}  $, add the edges $\left\{  w,s\right\}  $ for all $s\in V_{i+1}$,
and write $H$ for the resulting graph. In other words, $H$ is obtained by
moving the vertex $w$ from $V_{i-1}$ into $V_{i}$. The choice of $G$ implies
that
\begin{align*}
0  &  \geq\rho_{\alpha}\left(  H\right)  -\rho\geq\left\langle A_{\alpha
}\left(  H\right)  \mathbf{x},\mathbf{x}\right\rangle -\left\langle A_{\alpha
}\left(  G\right)  \mathbf{x},\mathbf{x}\right\rangle \\
&  \geq\left(  1-\alpha\right)  \left(  \left\langle A\left(  H\right)
\mathbf{x},\mathbf{x}\right\rangle -\left\langle A\left(  G\right)
\mathbf{x},\mathbf{x}\right\rangle \right)  +\alpha\left(  \left\langle
D\left(  H\right)  \mathbf{x},\mathbf{x}\right\rangle -\left\langle D\left(
G\right)  \mathbf{x},\mathbf{x}\right\rangle \right)  .
\end{align*}
On the other hand, it is not hard to see that
\begin{align*}
\left\langle A\left(  H\right)  \mathbf{x},\mathbf{x}\right\rangle
-\left\langle A\left(  G\right)  \mathbf{x},\mathbf{x}\right\rangle  &
=2x_{w}\left\vert V_{i+1}\right\vert x_{d}-2x_{w}x_{a}>\left(  x_{d}%
-x_{a}\right)  x_{w}\\
\left\langle D\left(  H\right)  \mathbf{x},\mathbf{x}\right\rangle
-\left\langle D\left(  G\right)  \mathbf{x},\mathbf{x}\right\rangle  &
=\left\vert V_{i+1}\right\vert x_{d}^{2}-x_{a}^{2}>x_{d}^{2}-x_{a}^{2}.
\end{align*}
Hence,%
\[
0\geq2\left(  1-\alpha\right)  \left(  x_{d}-x_{a}\right)  x_{w}+\alpha\left(
x_{d}^{2}-x_{a}^{2}\right)  =\left(  x_{d}-x_{a}\right)  \left(  2\left(
1-\alpha\right)  x_{w}+\alpha\left(  x_{d}+x_{a}\right)  \right)  >0.
\]
This contradiction completes the proof of Theorem \ref{mth}.
\end{proof}

\section{\label{sthcl}Proof of Theorem \ref{thcl}}

Our\ proof of Theorem can be broken into several distinct steps, which are
formulated below as separate propositions in a slightly more general form.

\begin{proposition}
\label{pro4}Let $\alpha\in\left[  0,1\right)  $, let $G$ be a connected graph
with $\rho_{\alpha}\left(  G\right)  \geq2$, and let $u\in V\left(  G\right)
$. If $q\geq1$ and $p\geq q$, then
\[
\rho_{\alpha}\left(  G_{p,q}\left(  u\right)  \right)  \geq\rho_{\alpha
}\left(  G_{p+q-1,1}\left(  u\right)  \right)
\]
with equality if and only if $q=1.$
\end{proposition}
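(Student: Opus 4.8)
The plan is to compare the two graphs $G_{p,q}(u)$ and $G_{p+q-1,1}(u)$ by a Rayleigh-quotient argument, exactly in the spirit of Lemma~\ref{th1}: find a single test vector $\mathbf{x}$ (the Perron vector of one of the two graphs, suitably relabelled) whose quadratic form against $A_{\alpha}$ of the other graph is at least as large. The key observation is that $G_{p+q-1,1}(u)$ is obtained from $G_{p,q}(u)$ by ``cutting'' the pendent path $Q$ of length $q-1$ off the vertex $v_1=u$ and ``gluing'' it onto the far end $u_p$ of the pendent path $P$ of length $p-1$; that is, we delete the edge $\{u,v_1'\}$ joining $u$ to the second vertex of $Q$ (when $q\geq 2$) and add an edge from $u_p$ to that vertex, leaving the rest of $Q$ as a dangling path attached to $u_p$. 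Thus it is natural to take $\mathbf{x}$ to be a positive unit Perron vector of $G:=G_{p,q}(u)$ with entries $x_1,\dots,x_p$ along $P$ and $y_1,\dots,y_q$ along $Q$, and to attempt to place the relocated path so that its entries only increase.

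First I would dispose of the trivial case $q=1$: then $G_{p,q}(u)=G_{p+q-1,1}(u)$ and equality holds, so assume $q\geq 2$. Next, using Corollary~\ref{cor0} (monotone decay along pendent paths) applied to $P$ in $G$, we have $x_p < x_{p-1} < \cdots < x_1$, and applied to $Q$ we have $y_q < \cdots < y_1$; in particular the entries $y_2,\dots,y_q$ that will be ``moved'' to hang off $u_p$ are each \emph{smaller} than $y_1=x_{u}$. Since $p\geq q$, Corollary~\ref{cor0} also gives a term-by-term comparison that the tail of $Q$ decays at least as fast as, and starts lower than, where it would sit if re-attached at the deeper vertex $u_p$; the precise inequality one wants is $x_p \geq y_{?}$-type control so that re-rooting the dangling path at $u_p$ does not force any entry below what it was. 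The cleanest way to make this rigorous is to take as test vector for $H:=G_{p+q-1,1}(u)$ the vector that agrees with $\mathbf{x}$ on $V(G)$ except that the moved vertices $v_2,\dots,v_q$ are assigned the \emph{same} values $y_2,\dots,y_q$, now sitting as a pendent path at $u_p$; then compute
\[
\rho_{\alpha}(H)-\rho_{\alpha}(G)\geq \langle A_{\alpha}(H)\mathbf{x},\mathbf{x}\rangle-\langle A_{\alpha}(G)\mathbf{x},\mathbf{x}\rangle
= -\alpha x_u^2 \cdot 0 + 2(1-\alpha)y_2\bigl(x_p - x_u\bigr) + (\text{degree terms at }u,\,u_p),
\]
and the point is that the edge $\{u,v_2\}$ of weight $2(1-\alpha)x_u y_2$ in the adjacency form is replaced by $\{u_p,v_2\}$ of weight $2(1-\alpha)x_p y_2$, with a compensating change $+\alpha y_2^2$ in the diagonal term at $u_p$ and $-\alpha y_2^2$ at $u$ — so in fact the diagonal terms cancel and only the adjacency change survives, giving a difference of sign $x_p-x_u<0$. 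This shows the naive choice of test vector is \emph{not} good enough, which is the real obstacle.

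To fix this, instead of keeping the moved path's entries fixed, I would use the Perron vector of $H$ itself (or argue on $H$) and exploit Proposition~\ref{pro1}/Lemma~\ref{le2}: along the \emph{longer} combined path in $H$ the Perron entries decay by the factor $\gamma$ determined by $\rho_{\alpha}(H)$, and the amplitude at the root $u$ is governed by the eigenequation at $u$ together with the clique/base structure of $G$, which is unchanged. Concretely, write $\rho:=\rho_{\alpha}(H)$, let $\mathbf{z}$ be its positive unit Perron vector, and let $z_1=z_u$. Using $\rho\geq 2$, Lemma~\ref{le1} gives an explicit closed form $z_{u_j}=A\gamma^{\,p+q-1-j}+B\gamma^{\,j-(p+q-1)}$ along the pendent path of $H$. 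I then build a test vector for $G=G_{p,q}(u)$ by taking $\mathbf{z}$ on the common part and, on the two shorter paths $P$ (length $p-1$) and $Q$ (length $q-1$) of $G$, by \emph{splitting} the single long path of $H$ at a chosen internal vertex and re-attaching the $Q$-portion at $u$; because $\gamma\geq 1$ the entries only grow when a tail is moved closer to the root, so $\langle A_{\alpha}(G)\mathbf{z},\mathbf{z}\rangle\geq \rho$, giving $\rho_{\alpha}(G)\geq\rho_{\alpha}(H)$. The strict inequality for $q\geq 2$ follows because equality in the Rayleigh step would force $\mathbf{z}$ to be an eigenvector of $A_{\alpha}(G)$, but the degree of $u$ differs in $G$ and $H$ (in $G$ the vertex $u$ has an extra neighbor, the second vertex of $Q$), so the eigenequations at $u$ for $\rho_{\alpha}(G)$ and $\rho_{\alpha}(H)$ are contradictory — exactly the mechanism used at the end of Lemma~\ref{rol} and in the proof of Theorem~\ref{mth}.

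The step I expect to be the main obstacle is pinning down the correct direction of the argument — i.e., realizing that one should test the Perron vector of the ``more spread-out'' graph $G_{p,q}(u)$ is \emph{wrong} and that instead one tests the Perron vector of $G_{p+q-1,1}(u)$ against $A_{\alpha}(G_{p,q}(u))$, so that the path tails move \emph{inward} (toward the root, where entries are larger) and monotonicity works in our favor; once the direction is right, the inequality $\langle A_{\alpha}(G)\mathbf{z},\mathbf{z}\rangle\ge\rho$ reduces to a sum of nonnegative terms of the form $(\text{larger entry}-\text{smaller entry})\times(\text{positive weight})$, and the strictness is a routine eigenequation-at-$u$ contradiction. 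The bookkeeping of which vertex of the long path of $H$ to split at, and checking that no entry decreases, is the only genuinely delicate computation, and it is handled cleanly by the closed form in Lemma~\ref{le1} together with $\gamma\geq 1$.
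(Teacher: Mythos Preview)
Your second pass lands on the right idea --- work with the Perron vector $\mathbf{z}$ of $H:=G_{p+q-1,1}(u)$ and use it as a test vector for $G:=G_{p,q}(u)$ --- and this is exactly what the paper does. But you are making the argument much harder than it needs to be. The key observation you are missing is that the passage from $H$ to $G$ is a \emph{single} edge rotation in the sense of Lemma~\ref{rol}: with the long pendent path of $H$ labelled $v_1=u,\ldots,v_{p+q-1}$, one deletes the edge $\{v_p,v_{p+1}\}$ and adds the edge $\{v_1,v_{p+1}\}$, obtaining $G$. Corollary~\ref{cor0} gives $z_{v_1}>z_{v_p}$, so Lemma~\ref{rol} (with $S=\{v_{p+1}\}$) yields $\rho_\alpha(G)>\rho_\alpha(H)$ immediately, including the strictness via the built-in eigenequation contradiction. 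That is the entire proof in the paper.

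In particular, there is no need for the closed forms of Lemma~\ref{le1}, no $\gamma$-computations, and no ``bookkeeping of which vertex of the long path of $H$ to split at'': the vector is used \emph{unchanged}, only the edge incidence moves, and the single inequality $z_{v_1}>z_{v_p}$ suffices. Your anticipated ``main obstacle'' (getting the direction right) is real and you resolved it correctly; the anticipated ``delicate computation'' is not.
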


\begin{proof}
If $q=1$, there is nothing to prove, so suppose that $q\geq2$. Let
$P_{p+q-1}=\left(  v_{1}=u,\ldots,v_{p+q-1}\right)  $ be the path attached to
$u$, let $\mathbf{x}$ be a positive eigenvector to $\rho=\rho_{\alpha}\left(
G_{p+q-1,1}\left(  u\right)  \right)  $, and let $x_{1},\ldots,x_{p+q-1}$ be
the $\mathbf{x}$-entries of $v_{1},\ldots,v_{p+q-1}$. Delete the edge
$\left\{  v_{p+1},v_{p}\right\}  $ and add the edge $\left\{  v_{p+1}%
,v_{1}\right\}  $, thus obtaining the graph $G_{p,q}\left(  u\right)  $. Since
Corollary \ref{cor0} implies that $x_{1}>x_{p}$, Lemma \ref{rol} implies that
$\rho_{\alpha}\left(  G_{p,q}\left(  u\right)  \right)  >\rho_{\alpha}\left(
G_{p+q-1,1}\left(  u\right)  \right)  $.
\end{proof}

\begin{proposition}
\label{pro5}Let $\alpha\in\left[  0,1\right)  $, let $G$ be a connected graph
with $\rho_{\alpha}\left(  G\right)  \geq2$, and let $u\in V\left(  G\right)
$. It $G_{u}\left(  T\right)  $ is the graph obtained by identifying $u$ with
a vertex of a tree $T$ of order $n$, then
\[
\rho_{\alpha}\left(  G_{T}\left(  u\right)  \right)  \geq\rho_{\alpha}\left(
G_{n,1}\left(  u\right)  \right)  .
\]
with equality if and only if $G_{T}\left(  u\right)  =G_{n,1}\left(  u\right)
$.
\end{proposition}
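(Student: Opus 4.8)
The plan is to reduce the tree $T$ to a path by repeatedly applying Proposition \ref{pro4}, peeling off branching one vertex at a time while never decreasing the $\alpha$-index, and then to pin down the equality case. First I would fix a positive unit eigenvector $\mathbf{x}$ to $\rho:=\rho_{\alpha}(G_T(u))$ and observe that, since $G$ is connected with $\rho_{\alpha}(G)\geq 2$ and $T$ has at least one vertex, we have $\rho\geq 2$ throughout, so that the machinery of Section \ref{spp} (in particular Corollary \ref{cor0}) and Lemma \ref{rol} are available at every stage. If $T$ is already a path whose attaching vertex $u$ is an endpoint, then $G_T(u)=G_{n,1}(u)$ and there is nothing to prove; otherwise I claim there is always a move that strictly increases $\rho_{\alpha}$, which together with finiteness of the process (the graph stays on the same vertex set and each move is a ``straightening'' step) forces the strict inequality.

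The key step is the straightening move. Regard $T$ as rooted at the identified vertex $u$ (more precisely, at the cut vertex of $G_T(u)$ lying in $T$). Pick a vertex $w$ of $T$ that is as far as possible from $u$ and has a neighbor $z$ in $T$ with $d_T(z)\geq 3$, i.e. $z$ is the deepest branching vertex and $w$ hangs below it on a pendent path $P=(z,w,\dots)$ emanating from $z$ inside $T$. By Corollary \ref{cor0} applied to this pendent path, the $\mathbf{x}$-entry at $z$ strictly exceeds the entry at the first vertex $w$ of $P$ after $z$. Now apply Lemma \ref{rol} with the pair $(v,u)=(z,w)$ in the notation of that lemma and with $S$ equal to the set of neighbors of $w$ inside $P$ other than $z$ (nonempty exactly when $P$ has length at least $2$) — wait: more cleanly, I would instead use Proposition \ref{pro4} locally. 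Let $z$ be a branching vertex of $T$ of maximum depth, so that below $z$ the tree consists only of (at least two) pendent paths $P_{p}$ and $P_{q}$ (with $p\geq q\geq 1$) hanging at $z$, plus possibly the edge toward $u$. Viewing $z$ as the vertex ``$u$'' of Proposition \ref{pro4} and the rest of $G_T(u)$ as the graph ``$G$'' of that proposition, Proposition \ref{pro4} gives $\rho_{\alpha}$ of the graph where those two paths are replaced by a single pendent path $P_{p+q-1}$ at $z$ is $\leq\rho_{\alpha}(G_T(u))$, with equality iff $q=1$; and when $q=1$ the same straightening still at least does not decrease $\rho_{\alpha}$ and strictly reduces the number of branching vertices. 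Iterating, after finitely many steps all branching inside $T$ is eliminated and we arrive at $G_{n,1}(u)$, giving $\rho_{\alpha}(G_{n,1}(u))\leq\rho_{\alpha}(G_T(u))$.

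For the equality case I would argue that if $G_T(u)\neq G_{n,1}(u)$ then $T$ is not a pendent path rooted at $u$, so at some stage of the reduction a genuine branching vertex $z$ with two pendent subpaths of which the shorter has length $q\geq 2$ must occur, or else a branching vertex with $q=1$ but then a strict Lemma \ref{rol} move is available (the neighbor of $z$ on the longer arm has strictly larger $\mathbf{x}$-entry than the leaf hanging at $z$, by Corollary \ref{cor0}, so moving that leaf's edge onto the deeper arm strictly increases $\rho_{\alpha}$). In either case one reduction step is strict, hence the overall inequality is strict. The main obstacle I anticipate is bookkeeping: making ``as far as possible from $u$'' precise enough that the local structure below $z$ really is two (or more) pendent paths meeting the exact hypotheses of Proposition \ref{pro4}, and handling more than two subpaths at a single $z$ by grouping them. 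A secondary technical point is confirming $\rho_{\alpha}\geq 2$ is preserved after each move — immediate, since each intermediate graph contains $G$ as a subgraph and $\rho_{\alpha}$ is monotone under addition of edges/vertices.
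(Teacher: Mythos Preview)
Your approach is essentially the same as the paper's: the authors omit the proof and simply state that it ``can be carried out along well-known lines, by applying Proposition \ref{pro4} to recursively flatten $T$ until it becomes a path.'' Your choice of the deepest branching vertex $z$ and the iterated merging of two pendent paths at $z$ via Proposition \ref{pro4} is exactly this flattening, and your check that the base graph at each step contains $G$ (hence has $\rho_{\alpha}\geq 2$) is the right way to keep the hypotheses of Proposition \ref{pro4} satisfied.

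One small cleanup: your equality analysis is more tangled than necessary. When $z$ is a genuine branching vertex of $T$ (degree $\geq 3$ in $T$), every pendent path hanging below $z$ has at least two vertices counting $z$ itself, so in the notation of Proposition \ref{pro4} both $p\geq 2$ and $q\geq 2$; the case $q=1$ never arises here, and the merge is automatically strict. The only remaining configuration with $G_T(u)\neq G_{n,1}(u)$ is that $T$ is already a path but $u$ is an interior vertex; then one direct application of Proposition \ref{pro4} at $u$ (again with $q\geq 2$) gives the strict inequality. So you can drop the separate ``$q=1$ plus a strict Lemma \ref{rol} move'' fallback entirely.
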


We omit the proof of Proposition \ref{pro5}, which can be carried out along
well-known lines, by applying Proposition \ref{pro4} to recursively flatten
$T$ until it becomes a path (see, e.g., \cite{StHa08} for more
details.)\bigskip

\begin{proof}
[\textbf{Proof of Theorem \ref{thcl}}]Let $G$ be a graph with minimal $\alpha
$-index among all connected graphs of order $n$ and clique number $\omega$. If
$\omega=2$, then $G$ must be a path, as the path it the graph with smallest
$\alpha$-index among connected graphs of given order (see \cite{NPRS17}).
Thus, we suppose that $\omega\geq3$ and let $H$ be a complete subgraph of $G$
of order $\omega$.

Further, $G$ should be edge-minimal, that is, the removal of any edge of $G$
either makes $G$ disconnected or its clique number diminishes. In particular,
if $G^{\prime}$ is the graph obtained by removing the edges of $H$, then the
components of $G^{\prime}$ are trees, and each component has exactly one
vertex in common with $H$. It follows that $G$ is isomorphic to a complete
graph of order $\omega$ with trees attached to some of its vertices. Moreover,
Proposition \ref{pro5} implies that each of those trees must be a pendent
path. To complete the proof, we show that there is only one such path.

Let $S=V\left(  H\right)  $, let $u,v\in S$, and suppose that a path
$P_{p}=\left(  v_{1}=v,\ldots,v_{p}\right)  $ is attached to $v$ and
$P_{q}=\left(  u_{1}=u,\ldots,u_{q}\right)  $ is attached to $v$. Let $F$ be
the graph obtained by deleting the edge $\left\{  u_{2},u_{1}\right\}  $ and
adding the edge $\left\{  u_{2},v_{p}\right\}  $, that is, $F$ is obtained by
removing $P_{q}$ and extending $P_{p}$ to $P_{p+q-1}$. To complete the proof,
we need to show that $\rho_{\alpha}\left(  G\right)  >\rho_{\alpha}\left(
F\right)  $.

Let $\rho=\rho_{\alpha}\left(  F\right)  $ and let $\mathbf{x}$ be a positive
eigenvector of $F$ to $\rho$. Write $x_{1},\ldots,x_{p+q-1}$ for the entries
of $\mathbf{x}$ corresponding to $v_{1},\ldots v_{p},u_{2},\ldots,u_{q}$, and
let $\gamma$ be defined by (\ref{gam}). Now, if $x_{u}\geq\gamma^{-1}x_{v}$,
then Proposition \ref{pro1} implies that $x_{u}\geq\gamma^{-1}x_{1}>x_{2}\geq
x_{p}$, and so, Lemma \ref{rol} implies that $\rho_{\alpha}\left(  G\right)
>\rho_{\alpha}\left(  F\right)  $. Thus, we focus on showing that $x_{u}%
\geq\gamma^{-1}x_{1}$.

On the one hand, the eigenequation for $u$ is
\[
\rho x_{u}=\left(  \omega-1\right)  \alpha x_{u}+\left(  1-\alpha\right)
\sum_{i\in S\backslash\left\{  u\right\}  }x_{i}=\left(  \omega\alpha
-1\right)  x_{u}+\left(  1-\alpha\right)  \sum_{i\in S}x_{i},
\]
and therefore,
\[
\left(  \rho+1-\omega\alpha\right)  x_{u}=\left(  1-\alpha\right)  \sum_{i\in
S}x_{i}.
\]
Likewise, for any $w\in S\backslash\left\{  u,v\right\}  $, the eigenequation
for $w$ gives
\[
\left(  \rho+1-\omega\alpha\right)  x_{w}\geq\left(  1-\alpha\right)
\sum_{i\in S}x_{i}.
\]
In particular, we see that $x_{w}\geq x_{u}$ for any $w\in S\backslash\left\{
u,v\right\}  $, since $\rho+1-\omega\alpha>\omega\left(  1-\alpha\right)  >0$.

Returning to the eigenequation for $u$, we find that%
\[
\rho x_{u}=\left(  \omega-1\right)  \alpha x_{u}+\left(  1-\alpha\right)
\sum_{i\in S\backslash\left\{  u\right\}  }x_{i}\geq\left(  \omega-1\right)
\alpha x_{u}+\left(  1-\alpha\right)  \left(  x_{1}+\left(  \omega-2\right)
x_{u}\right)  ,
\]
yielding in turn%
\[
\left(  \rho-\omega+2-\alpha\right)  x_{u}\geq\left(  1-\alpha\right)  x_{1}.
\]
Assuming for a contradiction that $x_{u}<\gamma^{-1}x_{1}$, after some
algebra, we get
\[
\frac{\rho-\omega+2-\alpha}{1-\alpha}>\gamma=\frac{\rho-2\alpha}{2\left(
1-\alpha\right)  }+\frac{1}{2\left(  1-\alpha\right)  }\sqrt{\left(
\rho-2\alpha\right)  ^{2}-4\left(  1-\alpha\right)  ^{2}}%
\]
and therefore,%
\begin{equation}
\rho-2\omega+4>\sqrt{\left(  \rho-2\alpha\right)  ^{2}-4\left(  1-\alpha
\right)  ^{2}}. \label{in3}%
\end{equation}
It is known that $\rho<\Delta\left(  F\right)  =\omega$, since $F$ is not a
$\omega$-regular graph. Hence,
\[
\omega-2\omega+4>0,
\]
a contradiction if $\omega\geq4.$ If $\omega=3,$ then (\ref{in3}) becomes%
\[
\rho-2>\sqrt{\left(  \rho-2\alpha\right)  ^{2}-4\left(  1-\alpha\right)  ^{2}%
}.
\]
Squaring both sides of this inequality, we get%
\[
\rho^{2}-4\rho+4>\rho^{2}-4\alpha\rho-4+8\alpha
\]
and so,%
\[
8\left(  1-\alpha\right)  >4\rho\left(  1-\alpha\right)  .
\]
Therefore $\rho<2$, an obvious contradiction, completing the proof of Theorem
\ref{thcl}.
\end{proof}

\section{\label{ops}Some open problems}

In this section we raise a few problems inspired by the results of Li and Feng
\cite{LiFe79}, which have been presented in some detail in Section 6.2 of
\cite{CRS97}, and in Section 8.1 of \cite{CRS10}.

\begin{conjecture}
\label{con1}Let $\alpha\in\left[  0,1\right)  $. If $G$ is a connected graph
and $p\geq q+2\geq3$, then%
\[
\rho_{\alpha}\left(  G_{p,q}\left(  u\right)  \right)  <\rho_{\alpha}\left(
G_{p-1,q+1}\left(  u\right)  \right)  .
\]

\end{conjecture}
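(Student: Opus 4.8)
The natural starting point is to imitate the proof of Lemma \ref{th1}. Fix $\alpha\in[0,1)$ and assume $G$ has at least one edge (if $G$ is a single vertex then $G_{p,q}(u)=G_{p-1,q+1}(u)=P_{p+q-1}$ and the strict inequality cannot hold, so this trivial case must be excluded). Write $G':=G_{p,q}(u)$, let $P=(u=a_1,\dots,a_p)$ and $Q=(u=b_1,\dots,b_q)$ be the attached paths, and let $\mathbf{x}$ be a positive unit eigenvector to $\rho:=\rho_\alpha(G')$. Deleting $\{a_{p-1},a_p\}$ and adding $\{a_p,b_q\}$ turns $G'$ into $G_{p-1,q+1}(u)$, so by Lemma \ref{rol} (with $S=\{a_p\}$ and the vertices called $u,v$ there taken to be $a_{p-1},b_q$) it would suffice to prove
\[
x_{b_q}\ \ge\ x_{a_{p-1}}.
\]
When $\rho$ is not too small this is easy, and in fact cleaner than in Lemma \ref{th1} because here $b_1=a_1=u$, so the factor ``$y_1/x_1$'' from Lemma \ref{le3} equals $1$: Proposition \ref{pro1} gives $x_{a_{p-1}}<\gamma^{-(p-2)}x_u$, inequality (\ref{lb}) gives $x_{b_q}>\frac{(\gamma^2-1)(1-\alpha)}{\gamma((1-\alpha)\gamma+\alpha)}\gamma^{-(q-1)}x_u$, and dividing (using $p-q-1\ge1$ and the defining equation (\ref{ceq}) of $\gamma$) reduces the claim to $(\gamma^2-1)(1-\alpha)\ge(1-\alpha)\gamma+\alpha$, which is exactly the inequality proved inside Lemma \ref{le3}; thus $\rho\ge 5/2$ (indeed a little less) suffices.

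The real obstruction is the range where $\rho=\rho_\alpha(G_{p,q}(u))$ is close to $2$ — and it can even be below $2$, e.g. when $\alpha<1/2$ and $G$ is a short graph. There $x_{b_q}\ge x_{a_{p-1}}$ can genuinely fail. Reading the eigenequations of $A_\alpha$ along each pendant path from the tip inwards gives the exact identity
\[
\frac{x_{b_q}}{x_{a_{p-1}}}=\frac{\rho_q\rho_{q+1}\cdots\rho_{p-1}}{\rho_1},\qquad \rho_1=\frac{\rho-\alpha}{1-\alpha},\quad \rho_{k+1}=\frac{\rho-2\alpha}{1-\alpha}-\frac1{\rho_k},
\]
and for $\rho$ near $2$ with $p=q+2$ and $q$ large the right-hand side drops below $1$ (for instance with $\alpha=0$, $G=K_2$, $p=12$, $q=10$, where $\rho\approx2.06$). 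In that regime Lemma \ref{rol} is silent in \emph{both} directions, so a global argument is unavoidable.

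My plan for the hard regime is to carry the characteristic-polynomial method of Li and Feng \cite{LiFe79} over to $A_\alpha$ (using $\phi_\alpha(\cdot):=\det(xI-A_\alpha(\cdot))$). Although $A_\alpha$ has no clean vertex-deletion recursion — removing a vertex lowers the degree, hence the diagonal $A_\alpha$-entry, of each neighbour — \emph{pendant paths} stay tractable: peeling the tips one at a time and introducing the auxiliary minors $\psi_t$ (the characteristic polynomial of the core with a $t$-vertex pendant path attached, with the tip's row and column deleted) one gets $\phi_\alpha(G_t)=(x-\alpha)\psi_t-(1-\alpha)^2\psi_{t-1}$ together with the constant-coefficient recursion
\[
\psi_{t+1}=(x-2\alpha)\psi_t-(1-\alpha)^2\psi_{t-1},
\]
whose characteristic roots are precisely $(1-\alpha)\gamma$ and $(1-\alpha)\gamma^{-1}$ for $\gamma$ as in (\ref{gam}) with $\rho$ replaced by $x$. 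Applying this to \emph{both} pendant paths at $u$ expresses $\phi_\alpha(G_{p,q}(u))$ through two polynomials of the core $G$ alone — $\phi_\alpha(G)$ and the $(u,u)$-minor of $xI-A_\alpha(G)$ — with Chebyshev-type coefficients built from the $\psi$-sequence. Forming the difference $\phi_\alpha(G_{p-1,q+1}(u))-\phi_\alpha(G_{p,q}(u))$ the core polynomials cancel, and — via telescoping identities for the $\psi$'s analogous to the classical $\phi(P_a)\phi(P_b)-\phi(P_{a+1})\phi(P_{b-1})=\phi(P_{a-b})$ — one expects a factorization into a path-type factor (positive at $x=\rho_\alpha(G_{p,q}(u))$, since there the argument exceeds a path $\alpha$-index) times a non-positive core expression (a sum over the neighbours of $u$ in $G$ and the cycles of $G$ through $u$, as in Li and Feng's proof). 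Evaluating at $x=\rho_\alpha(G_{p,q}(u))$ then makes $\phi_\alpha(G_{p-1,q+1}(u))$ negative there, forcing its largest root to exceed $\rho_\alpha(G_{p,q}(u))$ — the assertion. I expect the genuine difficulty to be precisely this last step: pushing the $\psi$-bookkeeping through to a \emph{clean, sign-definite} factorization valid uniformly in $\alpha$, since the $\alpha$-corrections on the diagonal spoil some of the cancellations that make the $\alpha=0$ case so tidy. Should a uniform factorization prove elusive, a fallback is to combine the Perron-vector argument above (valid for $\rho\ge 5/2$, say) with a direct check of the finitely many shapes of $G_{p,q}(u)$ with $\rho_\alpha<5/2$, these being highly restricted — for $\alpha=0$ they are the proper subgraphs of the affine Dynkin diagrams.
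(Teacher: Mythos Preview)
The statement you are attempting to prove is \emph{Conjecture~\ref{con1}} in the paper --- an explicitly open problem, not a theorem. The paper contains no proof of it; the authors write only that ``using edge rotation, we can show that the statement is true for any $\alpha\in[0,1)$ as long as $\rho_{\alpha}(G_{p,q}(u))\geq 9/4$, but this constraint seems unnecessarily strong.'' Your first block (the Perron-vector argument via Proposition~\ref{pro1}, inequality~(\ref{lb}), and Lemma~\ref{rol}) therefore recovers, with a slightly worse threshold ($5/2$ versus $9/4$), exactly the partial result the authors already claim. There is no full proof in the paper to compare against, and you correctly diagnose why: the edge-rotation inequality $x_{b_q}\ge x_{a_{p-1}}$ can genuinely fail when $\rho$ is near~$2$.

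What you have written for the hard regime is a research plan, not a proof, and you are candid about this. The $A_\alpha$-adaptation of the Li--Feng polynomial method via the $\psi$-recursion is a reasonable line of attack, but the ``clean, sign-definite factorization valid uniformly in $\alpha$'' that you flag as the crux is precisely the unresolved point --- the diagonal perturbation $\alpha D(G)$ makes the minor-deletion identities depend on the neighbours' degrees, and you give no evidence that the needed telescoping survives. Your fallback is also weaker than you suggest: restricting to cores $G$ with $\rho_\alpha(G_{p,q}(u))<5/2$ may pin down finitely many isomorphism types of the \emph{core} $G$, but for each such core there remain infinitely many pairs $(p,q)$ --- e.g.\ for $\alpha=0$ and $G=K_2$ one has $\rho_0(G_{p,q}(u))\to 2$ as $p,q\to\infty$ --- so ``a direct check of the finitely many shapes'' does not close the gap without a further uniform argument in $p$ and $q$. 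In short: you have rediscovered the paper's stated partial result and outlined a plausible attack on the rest, but the conjecture remains open and your proposal does not yet settle it.
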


As shown by Li and Feng in \cite{LiFe79}, the above statement is true for
$\alpha=0$ (see also Theorem 8.1.20 in \cite{CRS10}). Moreover, Cvetkovi\'{c}
and Simi\'{c} \cite{CvSi09} showed that the statement is true for $\alpha=1/2$
as well. However, none of these techniques applies directly for other
$\alpha\in\left[  0,1\right)  $. Using edge rotation, we can show that the
statement is true for any $\alpha\in\left[  0,1\right)  $ as long as
$\rho_{\alpha}\left(  G_{p,q}\left(  u\right)  \right)  \geq9/4$, but this
constraint seems unnecessary strong.

Similar questions can be studied for pendent paths attached to different
vertices of a connected graph $G$.

\begin{conjecture}
\label{con2}Let $\alpha\in\left[  0,1\right)  $. Let $G$ be a connected graph,
and let $u$ and $v$ be adjacent vertices of $G$ of degree at least $2$. If
$q\geq1$ and $p\geq q+2$, then%
\begin{equation}
\rho_{\alpha}\left(  G_{p,q}\left(  u,v\right)  \right)  <\rho_{\alpha}\left(
G_{p-1,q+1}\left(  u,v\right)  \right)  . \label{cin1}%
\end{equation}

\end{conjecture}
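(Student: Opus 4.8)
The plan is to mimic the strategy used for Lemma \ref{th1}: perform a single edge-rotation that turns $G_{p,q}(u,v)$ into $G_{p-1,q+1}(u,v)$, namely delete the last edge $\{u_{p},u_{p-1}\}$ of the longer pendent path and add the edge $\{u_{p},v_{q}\}$ joining the freed vertex to the tip of the shorter path; then compare quadratic forms. Writing $\mathbf{x}$ for a positive unit eigenvector to $\rho:=\rho_{\alpha}(G_{p,q}(u,v))$, with $x_{1},\ldots,x_{p}$ the entries along the path at $u$ and $y_{1},\ldots,y_{q}$ the entries along the path at $v$, the same computation as in Lemma \ref{th1} gives
\[
\rho_{\alpha}(G_{p-1,q+1}(u,v))-\rho \;\geq\; \bigl(\alpha x_{p-1}+\alpha y_{q}+2(1-\alpha)x_{p}\bigr)\bigl(y_{q}-x_{p-1}\bigr),
\]
so the whole problem reduces, exactly as before, to establishing $y_{q}>x_{p-1}$. (Strictness of the final inequality then follows from Lemma \ref{rol}, or from a direct argument that $\mathbf{x}$ cannot remain an eigenvector after the rotation.)

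Next I would control the ratio $y_{q}/x_{p-1}$ by splitting it as $(y_{q}/y_{1})\cdot(y_{1}/x_{1})\cdot(x_{1}/x_{p-1})$. For the first and third factors I would invoke the exponential-decay estimates of Section \ref{spp}: Proposition \ref{pro1} gives $x_{1}/x_{p-1}>\gamma^{p-2}$, and Lemma \ref{le2} (inequality (\ref{lb})) gives $y_{q}/y_{1}>\frac{(\gamma^{2}-1)(1-\alpha)}{\gamma((1-\alpha)\gamma+\alpha)}\gamma^{-q+1}$, so that
\[
\frac{y_{q}}{x_{p-1}} \;>\; \frac{(\gamma^{2}-1)(1-\alpha)}{\gamma((1-\alpha)\gamma+\alpha)}\,\gamma^{\,p-q-1}\,\frac{y_{1}}{x_{1}} \;\geq\; \frac{(\gamma^{2}-1)(1-\alpha)}{(1-\alpha)\gamma+\alpha}\,\frac{y_{1}}{x_{1}},
\]
using $p-q-1\geq1$; this is precisely the chain (\ref{in1}) from the proof of Lemma \ref{le3}, and under $\rho\geq5/2$ the leading fraction is $\geq3/2$, so $y_{q}/x_{p-1}>\tfrac{3}{2}(y_{1}/x_{1})$. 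It then suffices to show $y_{1}/x_{1}\geq 2/3$. Here the adjacency of $u$ and $v$ and the hypothesis $d(u),d(v)\geq2$ enter: writing the eigenequations at $u$ and at $v$ and using that $u\sim v$, one gets $(\rho-\alpha d(u))x_{1}=(1-\alpha)(x_{v\text{-side neighbors}})+(1-\alpha)y_{1}+(1-\alpha)x_{2}$ and a symmetric expression at $v$, and after bounding $x_{2}\le\gamma^{-1}x_{1}$, $y_{2}\le\gamma^{-1}y_{1}$ one derives a lower bound on $y_{1}/x_{1}$ of the same shape as (\ref{in2}), namely $y_{1}/x_{1}>1-\frac{(1-\alpha)}{(\rho-\alpha d(u))\gamma}$ (possibly with a correction term from the $x_1 y_1$ cross-edge, which only helps). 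Since in a connected graph with these pendent paths $\rho$ is bounded below in terms of the core of $G$, and $\gamma\ge1$, this forces $y_1/x_1$ close to $1$, certainly $\ge2/3$, completing the argument.

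The main obstacle I anticipate is exactly the spot where the analogous proof for $G_{p,q}(u)$ (Conjecture \ref{con1}) stalls: to make the $\rho\ge5/2$ threshold available one needs a lower bound on $\rho_{\alpha}(G_{p,q}(u,v))$, and there is no clique of size $\ge4$ here to supply one (unlike in Lemma \ref{th1}, where Proposition \ref{pro3} gave $\rho_{\alpha}(K_k-e)\ge k-3+2\alpha$). All one can say in general is $\rho_{\alpha}(G_{p,q}(u,v))\geq\rho_{\alpha}(P_n)$, which tends to $2$ from below, so the bound $\rho\ge5/2$ genuinely fails for "thin" graphs $G$. This is why the authors only claim the statement as a conjecture and remark that their edge-rotation method needs $\rho_{\alpha}\geq9/4$; pushing the threshold down to cover, say, $\rho_{\alpha}>2$ (which is all one can hope for, since $\rho_{\alpha}=2$ forces $G$ to be a path or cycle where the statement is trivial or vacuous) would require either a sharper version of the quadratic-form comparison that exploits the inequality $d(u),d(v)\ge2$ more aggressively, or a different mechanism entirely — e.g. a continuity/interlacing argument interpolating between the known cases $\alpha=0$ and $\alpha=1/2$, which the authors note does not transfer directly.
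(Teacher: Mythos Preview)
The paper does not prove this statement: Conjecture \ref{con2} is explicitly listed among the open problems in Section \ref{ops}, with only the remark that the case $\alpha=0$ is due to Li and Feng and that their methods do not seem to transfer. There is therefore no ``paper's own proof'' to compare against; what you have written is an attempted attack on an open problem, and to your credit you already flag the main obstruction (no a priori lower bound on $\rho$ to replace Proposition~\ref{pro3}).

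However, there is a second genuine gap in your argument, beyond the $\rho\ge 5/2$ threshold issue you already identify. Your claimed bound $y_{1}/x_{1}>1-\dfrac{1-\alpha}{(\rho-\alpha d(u))\gamma}$ ``of the same shape as (\ref{in2})'' does not follow from the eigenequations at $u$ and $v$ in a general graph $G$. In the proof of Lemma~\ref{th1} that derivation used a very special feature of $B_{k,p,q}$: the two root vertices $u$ and $v$ have \emph{identical} neighborhoods inside the clique, so the sums $\sum_{i\in\Gamma_{H}(u)\setminus\{v\}}x_{i}$ and $\sum_{i\in\Gamma_{H}(v)\setminus\{u\}}x_{i}$ coincide and can be cancelled against each other. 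For an arbitrary connected $G$ with adjacent $u,v$ of degree at least $2$, the sets $\Gamma_{G}(u)\setminus\{v\}$ and $\Gamma_{G}(v)\setminus\{u\}$ are generally unrelated, the two eigenequations share no common term apart from the cross terms $x_{1},y_{1}$, and no inequality of the form (\ref{in2}) drops out. In particular, if $u$ has many high-weight neighbours that $v$ does not see, $y_{1}/x_{1}$ can be made small, so ``this forces $y_{1}/x_{1}$ close to $1$, certainly $\ge 2/3$'' is unjustified. Any route via the quadratic-form comparison must therefore supply a new mechanism for controlling $y_{1}/x_{1}$ that genuinely uses the adjacency of $u$ and $v$ and the hypothesis $d_{G}(u),d_{G}(v)\ge 2$; your sketch does not yet do this.
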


It has been shown by Li and Feng \cite{LiFe79} that the above statement is
true for $\alpha=0$ (see also Theorem 8.1.22 in \cite{CRS10}). Again, their
methods seem not immediately applicable to Conjecture \ref{con2}.

Note that the requirement that the degree of $u$ and $v$ be at least $2$ is
important, for otherwise strict inequality may not always hold in
(\ref{cin1}); e.g., if $G$ is an edge and $u,v$ are its ends, then
$G_{3,1}\left(  u,v\right)  =G_{2,2}\left(  u,v\right)  =P_{4}$. The
requirement $d_{G}\left(  u\right)  \geq2$ and $d_{G}\left(  v\right)  \geq2$
has been omitted in Lemma 2.1 of \cite{StHa08} and in Lemma 4.3 of
\cite{Ste14}, which makes these statements technically incorrect, although
their applications in \cite{StHa08} and in \cite{Ste14} are fine.

Further, Lemma \ref{th1} suggests that the requirement for $u$ and $v$ to be
adjacent may not always be necessary, so we raise the following question:

\begin{question}
\label{que1}For which connected graphs $G$ the following statement is true:

Let $\alpha\in\left[  0,1\right)  $ and let $u$ and $v$ be non-adjacent
vertices of $G$ of degree at least $2$. If $q\geq1$ and $p\geq q+2$, then%
\[
\rho_{\alpha}\left(  G_{p,q}\left(  u,v\right)  \right)  <\rho_{\alpha}\left(
G_{p-1,q+1}\left(  u,v\right)  \right)  .
\]

\end{question}

Little seems known about Question \ref{que1}, even for $\alpha=0$. It is not
hard to find examples of trees showing that the opposite inequality may hold sometimes.

\bigskip

\textbf{Acknowledgement }The research of O. Rojo was supported by
\emph{Project Fondecyt Regular 1170313, Chile}.

\bigskip

\end{document}